\newtheorem{theorem}{Theorem}
\newtheorem{theoremA}{Theorem}
\newtheorem{corollaryA}[theoremA]{Corollary}
\newtheorem{lemmaA}[theoremA]{Lemma}
\newtheorem{corollary}[theorem]{Corollary}
\newtheorem{definition}[theorem]{Definition}
\newtheorem{lemma}[theorem]{Lemma}
\newtheorem{problem}{Problem}
\newtheorem{remark}[theorem]{Remark}
\newcommand{\inte}{\mathrm{int}}
\newcommand{\rr}{\mathbb{R}}
\newcommand{\zz}{\mathbb{Z}}
\newcommand{\nn}{\mathbb{N}}
\newcommand{\sect}{\operatorname{Sect}}
\newcommand{\ric}{\operatorname{Ric}}
\newcommand{\bb}{\mathbb{B}}
\newcommand{\scal}{\operatorname{Scal}}
\newcommand{\curv}{\mathrm{Curv}}
\newcommand{\R}{\mathbb{R}}
\newcommand{\N}{\mathbb{N}}
\newcommand{\pM}{{\partial M}}
\renewcommand{\a}{{\alpha}}
\renewcommand{\b}{{\beta}}
\newcommand{\g}{{\gamma}}
\newcommand{\e}{{\epsilon}}
\newcommand{\s}{{\sigma}}
\renewcommand{\l}{{\lambda}}
\begin{document}

\title[Complete Riemannian extension]{The smooth Riemannian extension problem: completeness}
\author{Stefano Pigola}
\address{Universit\`a dell'Insubria, Dipartimento di Scienza e Alta Tecnologia\\
Via Valleggio 11, 22100 Como, Italy} \email{stefano.pigola@uninsubria.it}
\author{Giona Veronelli}
\address{Universit\'e Paris 13, Sorbonne Paris Cit\'e, LAGA, CNRS ( UMR 7539)
99\\
avenue Jean-Baptiste Cl\'ement F-93430 Villetaneuse - FRANCE} \email{veronelli@math.univ-paris13.fr}
\date{December 21, 2015}

\begin{abstract}
By means of a  general gluing and conformal-deformation construction, we prove that any smooth, metrically complete Riemannian manifold with smooth boundary can be realized as a closed domain into a smooth, geodesically complete Riemannan manifold without boundary. Applications to Sobolev spaces, Nash embedding and local extensions with strict curvature bounds are presented.

\end{abstract}
\maketitle
\tableofcontents

\section*{Introduction}

Let $(M,g_M)$ be a given Riemannian manifold with smooth (possibly non-compact) boundary $\partial M \not = \emptyset$ and subjected to some constraint on one of its Riemannian invariants, such as a curvature (or a volume growth) bound. The general problem we are interested in consists in understanding when, and to what extent, the original manifold $M$ can be prolonged past its boundary in order to obtain a new smooth Riemannian manifold $(\bar M ,  g_{\bar M})$, this time without boundary, such that one of the invariants alluded to above is kept controlled. Clearly, the most interesting situation occurs when the extended metric can be taken to be geodesically complete. In this case we can speak of $(\bar M,  g_{\bar M})$ as a \textit{complete Riemannian extension of $(M,g_M)$ with controlled Riemannian invariants}. First insights into the possibility of constructing a complete prolongation were given by S. Alexander and R. Bishop in \cite {AB}. Actually, this paper is mostly focused on the prolongation of open manifolds without boundary, but it contains useful information also in the boundary case. The smooth extension, via gluing techniques, of compact manifolds with a strict Ricci curvature lower bound and a convexity condition on the boundary was considered by G. Perelman in \cite{Per}; see also \cite{Wan}. Extensions of compact manifolds with non-negative scalar curvature up to the mean convex boundary are contained in \cite{R}. The extended metric is just $C^{2}$ but this is (abundantly) enough to get interesting rigidity results based on the positive mass theorem. Very recently, \cite{AMW}, a gluing technique in the spirit of \cite{Per} has been applied to prove that the space of metrics with non-negative Ricci curvature and convex boundary on the Euclidean three ball is path connected. In a somewhat different direction, gluing methods have been also employed by J. Wong, \cite{Won} in order to obtain isometric extensions with totally geodesic boundary and a metric-curvature lower bound in the sense of Alexandrov. This has applications to Gromov-Hausdorff precompactness results and volume growth estimates.\smallskip

In view of the well known relations between the topology of a complete Riemannian manifold and the bounds on its curvatures, or its volume growth, we are naturally led to guess that some topological obstruction appears somewhere in the extension process. In this direction, it would be important to verify whether some of these obstructions are encoded in the original piece with boundary and this requires, first, a phenomenological investigation over concrete examples. For instance, a complete extension with non-negative Ricci curvature should be forbidden in general. In this respect note that the topology of a compact manifold with convex boundary and positive Ricci curvature cannot be too much wide and this is compatible with the positive results we have mentioned above. For instance, by Bochner methods, the first real Betti number must vanish and, thus, the fundamental group cannot contain $\zz^{k}$ as a free factor. Topological obstructions should also appear at the level of upper sectional curvature bounds. Think for instance to the Cartan-Hadamard theorem, valid in the setting of geodesic metric spaces. The possibility of extending a complete simply connected manifold with boundary and negative curvature $K<0$ to a complete manifold with sectional curvature controlled by $K+\epsilon$ was addressed by S. Alexander, D. Berg and R. Bishop, \cite[p. 705]{ABB-TAMS}, during their investigations on isoperimetric properties under the assumption that the boundary has negative curvature on its concave sections. We are grateful to S. Alexander for pointing out this reference. In sharp contrast, in view of J. Lohkamp insights, \cite{Loh-Annals2}, it is expected that an upper Ricci curvature bound imposes no restrictions at all. The related question concerning how many Riemannian extensions could be obtained is interesting in its own. For instance, in the context of Einstein manifolds, because of the real analyticity of the Einstein metric, it is reasonable that some strong metric rigidity arises once the topological information (from the fundamental group viewpoint) is concentrated in the original part.\smallskip

This very brief and informal discussion serves to outline a major project concerning the systematic investigation around the  \textit{Riemannian extension problem}.
\begin{definition}
 Let $(M,g_{M})$ be a smooth $m$-dimensional Riemannian manifold with possibly nonempty boundary. A Riemannian extension of $(M,g_{M})$ is any smooth $m$-dimensional Riemannian manifold $(N,g_{N})$ with possibly non-empty boundary such that $M$ is isometrically embedded in $\inte N$.
\end{definition}
Roughly speaking, this project could be articulated in the following problems that represent the basic steps towards a suitable understanding of the subject.

\begin{problem}[completeness]\label{problem-completeness}
Let $(M,g_M)$ be a metrically complete Riemannian manifold with smooth boundary $\partial M \not=\emptyset$. Does there exist a geodesically complete Riemannian extension $(\bar M,g_{\bar M})$ of $M$ with $\partial \bar M = \emptyset$?
\end{problem}

\begin{problem}[curvature constraints]\label{problem-curvature}
 Let $\curv$ denote either of the curvatures $\sect$, $\ric$ or $\scal$ and let $C\in\R$. Let $(M,g_M)$ be a smooth $m$-dimensional, (non-nencessarily complete) Riemannian manifold with smooth boundary $\partial M \not=\emptyset$ satisfying $\curv_{g_M} < C$ (resp. $\leq C$, $>C$ or $\geq C$). Does there exist a complete, $m$-dimensional Riemannian extension $(\bar M ,g_{\bar M})$ with $\partial \bar M = \emptyset$ and such that the same curvature constraint holds?
\end{problem}

\begin{problem}[rigidity]\label{problem-rigidity}
 Let $(M,g_M)$ be a smooth, $m$-dimensional, complete Riemannian manifold with smooth boundary $\partial M \not=\emptyset$ satisfying $\ric = \lambda \in \rr$. Which topological conditions on $M$ are required to guarantee that a complete Einstein extension $(\bar M , g_{\bar M})$ exists and is essentially unique?
\end{problem}

We shall deal with Problems \ref{problem-curvature} and \ref{problem-rigidity} in the forthcoming paper \cite{PV-curv} where, in particular: we will examine, under different viewpoints,  a number of examples showing obstructions to the existence of  a complete Riemannian extension under Ricci curvature lower bounds and sectional curvature upper bounds; we will obtain positive results for upper Ricci curvature bounds; and finally,  we will investigate rigidity issues in the Einstein case.

\medskip

\noindent The aim of the present paper is to give an exhaustive answer to Problem  \ref{problem-completeness}.
\begin{theoremA}\label{theorem-extension}
Let $(M,g_{M})$ be an $m$-dimensional connected Riemannian manifold with  smooth boundary $\pM \not= \emptyset$. Let $Q$ be any smooth $m$-dimensional differentiable manifold whose nonempty boundary $\partial Q$ is diffeomorphic to $\pM$. Then, there exists a Riemannian extension $(N,g_{N})$ of $(M,g_{M})$ such that $N\setminus M$ is diffeomorphic to the interior of $Q$. Moreover, if $(M,g_{M})$ is complete, then the extension $(N,g_{N})$ can be constructed to be complete.
\end{theoremA}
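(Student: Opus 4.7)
The strategy I would follow, in line with the abstract's announced ``gluing and conformal-deformation construction'', is to split the argument into three stages: (i) topologically glue $M$ to $Q$ along $\pM \cong \partial Q$ to produce the smooth manifold $N$; (ii) extend $g_M$ to a smooth (but a priori incomplete) Riemannian metric $\tilde g$ on $N$; (iii) apply a global conformal correction that leaves $\tilde g|_M$ untouched but forces geodesic completeness.

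\textbf{Gluing and smooth metric extension.} For stage (i), pick a smooth collar $[0,\varepsilon)\times\pM \hookrightarrow M$ of $\pM$ and a smooth collar of $\partial Q$ in $Q$, and use the given boundary diffeomorphism to glue $M$ and $Q$ along their boundaries via the standard collaring construction. This yields a smooth $m$-manifold $N$ without boundary in which $M$ sits as a closed codimension-zero submanifold and $N\setminus M$ is diffeomorphic to $\inte Q$. For stage (ii), work in Fermi coordinates $(t,x)$ on a collar of $\pM$ in $M$, in which $g_M$ writes as $dt^2 + h_t$ for a smooth family of metrics $h_t$ on $\pM$. Extend the family $h_t$ smoothly from $t\in[0,\varepsilon)$ to $t\in(-\delta,\varepsilon)$ (for instance by Seeley extension coefficient by coefficient, or by a smooth reflection), obtaining a smooth metric on a bilateral collar of $\pM$ inside $N$. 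Glue this bilateral-collar metric with any background smooth metric on $Q$ by a partition of unity supported away from $\pM$. The resulting $\tilde g$ on $N$ satisfies $\tilde g|_M = g_M$, so $M$ sits isometrically in $N$.

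\textbf{Completeness via conformal correction.} The metric $\tilde g$ may fail to be complete on the $Q$-side. To repair this without disturbing the isometric copy of $M$, choose a smooth proper function $\rho:N\to[0,\infty)$ with $\rho\equiv 0$ on an open neighborhood $W$ of $M$ and $\rho(p)\to\infty$ as $p$ exits every compact subset of $N\setminus M$; such a $\rho$ exists because $N\setminus M$ is diffeomorphic to $\inte Q$, hence admits a smooth proper exhaustion which can be pushed away from $\pM$ by a cutoff. Set $g_N:=\phi^2\tilde g$ with $\phi=F(\rho)$, where $F:[0,\infty)\to[1,\infty)$ is smooth, $F\equiv 1$ near $0$, and grows sufficiently fast at infinity. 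Since $\phi\equiv 1$ on $W\supset M$, we have $g_N|_M = g_M$. Completeness of $(N,g_N)$ is then established in the spirit of Nomizu-Ozeki: choose $F$ so that every curve that crosses the region $\{\rho\geq n\}$ has $g_N$-length at least $n$. Together with the hypothesis that $(M,g_M)$ is metrically complete, this forces every closed $g_N$-bounded subset of $N$ to be compact, so $(N,g_N)$ is complete, and Hopf-Rinow promotes this to geodesic completeness.

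\textbf{Main obstacle.} The delicate point is the last stage. One has to control three a priori distinct sources of non-completeness with a single conformal factor: divergent sequences in $N\setminus M$ (handled by $\phi\to\infty$ at $Q$-infinity); sequences in $M$ that are non-Cauchy in $(M,g_M)$ but could become Cauchy in $(N,g_N)$ through short detours through $Q$ (so one needs $\phi$ sufficiently large on $N\setminus W$ to make any such detour costly); and sequences that oscillate across $\pM$. The growth rate of $F$ and the shape of $\rho$ must be tuned so as to handle all three simultaneously, and checking this is the technical core of the argument.
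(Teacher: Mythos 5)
Your plan follows the same three-stage route as the paper (glue, extend the metric, conformally correct), and your stage (ii) --- Fermi coordinates plus a Seeley or reflection extension of the collar family $h_t$ --- is a clean alternative to the paper's chart-by-chart extension with a partition of unity. The gap is entirely in stage (iii), where you correctly identify the obstacle but do not supply the idea that resolves it.

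First, the auxiliary function you postulate cannot exist when $M$ is non-compact (which Theorem A allows): if $\rho$ is proper on $N$ and $\rho\equiv 0$ on an open $W\supset M$, then $\rho^{-1}(\{0\})\supset W$ is closed and non-compact, contradicting properness. Even reinterpreting $\rho$ as proper only on $N\setminus M$, a more fundamental problem remains: since $F\equiv 1$ near $0$, the conformal factor $F(\rho)$ is $\equiv 1$ on a full neighborhood of $\pM$, so it cannot penalize shortcuts through $N\setminus M$ that stay close to $\pM$. When $\pM$ is non-compact the extended metric $\tilde g$ may admit such shortcuts of arbitrarily small $\tilde g$-length between boundary points that are far apart in $(M,g_M)$, and may even contain a finite-$\tilde g$-length divergent path inside $W$; no growth condition on $F$ repairs this, because the pathology lives precisely where $F(\rho)=1$. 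The paper handles both issues. Lemmas \ref{lemma-extension-1} and \ref{lemma-extension-2} show that a sufficiently thin one-sided collar $\mathcal{X}_Q$ admits a $(1+\epsilon)$-Lipschitz reflection projection $P=M\cup\mathcal{X}_Q\to M$, which transfers divergent-path completeness from $M$ to $P$; this is the careful choice of the neighborhood your $W$ is missing. Then Lemma \ref{lem_metric} builds the conformal factor on $N\setminus\inte P$ not from a single exhaustion but piece by piece on an overlapping family $\{N_{j,a}\}$, $\{\hat N_{j,b}\}$: on the annular pieces $N_{j,a}$ it is scaled by $q_1^{j,a}$ so that each shell costs $g_N$-length at least $1$ (this part your $F(\rho)$ could mimic), while on the pieces $\hat N_{j,b}$ touching $\partial P$ it is scaled by $q_2^{j,b}=\inf \tilde d_{\hat N_{j,b}}(x,y)\big/d_{(P,g_P)}(x,y)$. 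This second, ratio-dependent inflation is the key missing ingredient: it forces every excursion of a path out of $P$ to have $g_N$-length at least the $(P,g_P)$-distance between its endpoints, so in the oscillating case each excursion can be replaced by a $P$-path of comparable length, and the argument reduces to the already-established completeness of $(P,g_P)$. A factor of the single variable $\rho$ carries no information about the $q_2^{j,b}$ and has no mechanism to encode it. Finally, passing from a Heine--Borel-type statement to geodesic completeness for a manifold with boundary requires the Hopf--Rinow theorem for locally compact length spaces (the paper's Theorem \ref{th_HR}), and the paper in fact runs the whole verification through the equivalent ``divergent paths have infinite length'' criterion, which is the formulation best adapted to the conformal deformation.
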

In particular, by choosing $Q=M$ in the previous statement, with the trivial identification of the boundaries, we get
\begin{corollaryA}\label{th_extension}
Let $(M,g_M)$ be a smooth complete, $m$-dimensional Riemannian manifold with smooth nonempty boundary $\partial M$. Then, there exists a geodesically complete Riemannian extension $(N,g_{N})$ of $(M,g_{M})$ with $\partial N = \emptyset$. 
\end{corollaryA}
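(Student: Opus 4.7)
The plan is to derive this result by a direct specialization of Theorem A. I would regard $M$ itself as the auxiliary smooth manifold $Q$, forgetting its Riemannian metric; then $\partial Q = \partial M$ is trivially diffeomorphic to $\partial M$ through the identity, so the topological hypothesis of Theorem A is met for free. Applying Theorem A produces a Riemannian extension $(N,g_N)$ of $(M,g_M)$ such that $N\setminus M$ is diffeomorphic to $\inte M$.

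Next I would verify the two conclusions of the corollary. For $\partial N = \emptyset$: by the definition of Riemannian extension, $M$ is isometrically embedded in $\inte N$, so $\partial M \subset \inte N$ and hence $\partial N \subset N \setminus M$; but $N\setminus M \cong \inte M$ is a manifold without boundary, forcing $\partial N = \emptyset$. Geometrically, the original boundary $\partial M$ has been absorbed into the interior of $N$ through the gluing, while the complement is the open interior of a second copy of $M$. For geodesic completeness, I would invoke the ``moreover'' clause of Theorem A directly: since $(M,g_M)$ is complete by hypothesis, the constructed $(N,g_N)$ is complete as well.

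The entire substance of the argument is thus carried by Theorem A, which must simultaneously extend the given metric smoothly across $\partial M$ onto an \emph{a priori} unrelated differentiable manifold $Q$ and arrange the added piece to be geodesically complete. The main obstacle is therefore internal to Theorem A rather than to the corollary; the only verification specific to the corollary is the elementary topological observation above, namely that the choice $Q = M$ forces $N$ to be boundaryless.
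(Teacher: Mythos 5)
Your proof is correct and is essentially the paper's own argument: take $Q=M$ with the identity identification of boundaries, apply Theorem~A, and read off completeness from the ``moreover'' clause. The additional verification that $\partial N=\emptyset$ is a sensible thing to spell out, since Theorem~A does not state it explicitly; note only that this conclusion is not specific to the choice $Q=M$ --- for any $Q$, the facts that $M\subset\inte N$ and that $N\setminus M$ is diffeomorphic to the boundaryless manifold $\inte Q$ already force $\partial N=\emptyset$, so the role of $Q=M$ is merely to make the statement of Theorem~A land exactly on Corollary~\ref{th_extension}.
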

These results are then applied in several directions. First, we show that Problem \ref{problem-curvature} has always a positive answer (regardless of any restriction on the boundary) if we remove the completeness assumption and we require strict curvature bounds; see Corollary \ref{corollary-localcurvature}. Next, we prove a density result \`a la Meyers-Serrin concerning first order Sobolev spaces on complete manifolds with boundary; see Corollary \ref{corollary-density}. Finally, as a direct consequence of Nash theorem, we observe that a complete Riemannian manifold with boundary has a proper isometric embedding into a Euclidean space; see Corollary \ref{corollary-nash}.

\section{The general gluing-deformation construction}
In this section we prove Theorem \ref{theorem-extension}. The manifolds $M$ and $Q$ are glued along the diffeomorphic boundaries and, using this ambient space, the original metric of $M$ is readily extended. At this point, the complete Riemannian extension is obtained via a careful conformal deformation. The proof that the deformed metric is actually complete relies on metric-space arguments.

\subsection{Preliminaries on metric spaces}

Given a metric space $(X,d)$, a continuous path $\gamma : [a,b] \to X$ is rectifiable if
\[
L_{d}(\gamma) := \sup \sum_{i=1}^{n} d(\gamma(t_{i-1}), \gamma(t_{i})) < +\infty
\]
where the supremum is taken with respect to all the partitions $t_{0}=a < t_{1} < \cdots < t_{n}=b$ of the interval $[a,b]$. In this case, the number $L_{d}(\gamma)$ is the metric-length of $\gamma$ and it is invariant by reparametrizations of the curve. On the metric space $(X,d)$ it is defined a length-distance given by
\[
d_{L} (x ,y) = \inf L_{d}(\gamma)
\]
the infimum being taken with respect to all rectifiable paths (if any) connecting $x$ to $y$. Observe that Lipschitz paths are trivially rectifiable and, conversely, every rectifiable path can be reparametrized to a constant speed, hence Lipschitz, path \cite[Proposition 2.5.9]{BBI}.
The metric space $(X,d)$ is a length metric space if $d = d_{L}$.\smallskip

Let $(M,g_M)$ be a smooth Riemannian manifold with (possibly empty) boundary $\partial M$. Its intrinsic distance, which is defined as the infimum of  the Riemannian lengths of piecewise $C^{1}$ paths connecting two given points, is denoted by $d_{(M,g_M)}$. It is well known that the metric space $(M,d_{(M,g_M)})$ is a length metric space. The Riemannian manifold $(M,g_M)$ is said to be complete if $(M,d_{(M,g_M)})$ is a complete metric space. Since $M$ is locally compact, the length-metric version of the Hopf-Rinow theorem, \cite[Theorem 2.5.28]{BBI} and Theorem \ref{th_HR} below, implies that the metric completeness of $M$ is equivalent to the Heine-Borel property which, in turn, is equivalent to the fact that a geodesic path $\gamma: [a,b) \to M$ extends continuously to the endpoint $b$. Here, by a geodesic, we mean a Lipschitz path which minimizes the distance between any couple of its points. It is well known that it is $C^{1}$ regular, \cite{AA, ABB-Illinois}.\smallskip

A further notion of completeness that turns out to be very useful in applications involves the length of divergent paths. This characterization will be used to show that the glued manifold constructed in the next section is complete.

\begin{definition}
 Let $(X,d)$ be a metric space (e.g. a Riemannian manifold with possibily non-empty boundary with its intrinsic metric). A continuous path $\gamma : [a, b) \to X$ is said to be a divergent path if, for every compact set $K \subset M$, there exists $a\leq T <b$ such that $\gamma(t) \not \in K$ for every $T \leq t < b$.  The metric space $(X,d)$ is called ``divergent paths complete'' (or complete with respect to divergent paths) if every locally Lipschitz divergent path $\gamma : [0,1) \to X$ has infinite length where, clearly, $L_{d}(\gamma) = \lim_{\delta \to 1} L_{d}(\gamma|_{[0,\delta]})$.
\end{definition}

It is well known that for a manifold without boundary, the notions of metric (hence geodesic) completeness and of divergent paths completeness are equivalent.
Let us point out that a similar equivalence holds more generally on a locally compact length space
hence, in particular, for manifolds with smooth boundaries. Namely, we have the following

\begin{theorem}[Hopf-Rinow]\label{th_HR}
Let $(X,d)$ be a locally compact length space. The following assertions are equivalent.
\begin{enumerate}
\item $(X,d)$ is metrically complete, i.e. it is complete as a metric space.
\item $(X,d)$ satisfies the Heine-Borel property, i.e. every closed metric ball in $X$ is compact.
\item $(X,d)$ is geodesically complete, i.e. every constant speed geodesic $\gamma:[0,a)\to X$ can be extended to a continuous path $\bar\gamma:[0,a]\to X$
\item Every Lipschitz path $\gamma:[0,a)\to X$ can be extended to a continuous path $\bar\gamma:[0,a]\to X$
\item $(X,d)$ is divergent paths complete, i.e. every locally Lipschitz divergent path $\gamma:[0,a)\to X$ has infinite length.
\end{enumerate}
\end{theorem}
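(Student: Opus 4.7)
The plan is to establish the equivalence via the scheme $(1)\Leftrightarrow(2)\Rightarrow(4)\Rightarrow(3)\Rightarrow(2)$, together with $(4)\Rightarrow(5)\Rightarrow(1)$, relying on the classical length-space Hopf--Rinow theorem (as cited from \cite{BBI}) for the initial biconditional $(1)\Leftrightarrow(2)$, which is the deepest input.

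The easy implications I would dispatch first. For $(2)\Rightarrow(4)$: any Lipschitz path $\gamma:[0,a)\to X$ with constant $L$ has image in $\bar B(\gamma(0),La)$, compact by (2); the Lipschitz bound makes $\gamma$ Cauchy at $a$, so the limit continuously extends $\gamma$. $(4)\Rightarrow(3)$ is trivial since a constant-speed geodesic is Lipschitz. For $(4)\Rightarrow(5)$: a locally Lipschitz divergent path $\gamma:[0,a)\to X$ of finite length $L$ reparametrizes by arclength, via \cite[Proposition 2.5.9]{BBI}, to a $1$-Lipschitz curve $\eta:[0,L)\to X$; hypothesis (4) extends $\eta$ to $\bar\eta:[0,L]\to X$, and monotonicity of arclength forces $\gamma(t)\to\bar\eta(L)$, trapping $\gamma$ in a precompact neighborhood of $\bar\eta(L)$ by local compactness, contradicting divergence. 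For $(5)\Rightarrow(1)$: from a Cauchy sequence $\{x_n\}$ without limit, thinned so that $d(x_n,x_{n+1})<2^{-n}$, the length-space hypothesis produces paths of length at most $2^{1-n}$ between consecutive points; concatenating and reparametrizing by arclength yields a $1$-Lipschitz path $\eta:[0,\bar\ell)\to X$ with $\bar\ell\le 2$. This $\eta$ diverges, for otherwise some $\eta(t_n)$ in a compact $K$ with $t_n\to\bar\ell$ would, by the Cauchy property of the $1$-Lipschitz $\eta$, force $\eta(t)\to p\in K$; then $x_n=\eta(L_n)\to p$ at the concatenation breakpoints $L_n\to\bar\ell$, contradicting the no-limit assumption. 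Thus $\eta$ witnesses the failure of (5).

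The main obstacle is $(3)\Rightarrow(2)$, which I would handle by a Hopf--Rinow-style exhaustion. Fix $x_0\in X$ and set $R=\sup\{r\ge 0 : \bar B(x_0,r)\text{ is compact}\}$, positive by local compactness, and assume $R<\infty$ for contradiction. The crux is to prove that $\bar B(x_0,R)$ is itself compact. Given $y_n\in\bar B(x_0,R)$ with $d(x_0,y_n)\nearrow R$, the compactness of $\bar B(x_0,r)$ for every $r<R$, combined with the length-space property and an Arzel\`a--Ascoli extraction inside these compact sub-balls, produces unit-speed minimizing geodesics $\sigma_n:[0,d(x_0,y_n)]\to X$ from $x_0$ to $y_n$. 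A diagonal extraction on a nested exhaustion of $[0,R)$ yields a subsequence converging uniformly on compact sub-intervals to a unit-speed geodesic $\sigma:[0,R)\to X$ (minimality on sub-intervals passes to the limit). Hypothesis (3) then extends $\sigma$ to $\bar\sigma:[0,R]\to X$, and a triangle-inequality estimate
\[
d(y_{n_k},\bar\sigma(R)) \le \bigl(d(x_0,y_{n_k})-(R-\delta)\bigr) + d(\sigma_{n_k}(R-\delta),\sigma(R-\delta)) + \delta,
\]
valid for large $k$ and arbitrary $\delta>0$, delivers $y_{n_k}\to\bar\sigma(R)$ after letting $k\to\infty$ and then $\delta\to 0$. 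Hence $\bar B(x_0,R)$ is sequentially compact. A standard finite-subcover argument using local compactness at each point of $\bar B(x_0,R)$ then produces $\varepsilon>0$ with $\bar B(x_0,R+\varepsilon)$ compact, contradicting the maximality of $R$. The delicacy of this step lies in extracting a genuine minimizing geodesic as a limit of the $\sigma_n$ (not merely a $1$-Lipschitz curve) and then leveraging the geodesic-extension hypothesis (3) precisely at the boundary of the compactness domain.
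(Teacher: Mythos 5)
Your plan is correct but takes a genuinely different route from the paper's. The paper cites \cite[Theorem 2.5.28]{BBI} for the full equivalence $(1)\Leftrightarrow(2)\Leftrightarrow(3)$ in locally compact length spaces, notes $(4)\Rightarrow(3)$ as trivial, and then proves only the two new implications $(2)\Rightarrow(5)$ and $(5)\Rightarrow(4)$: the first via the observation that a divergent path eventually exits each $\overline{B}_n^X(\gamma(0))$ and so has length $\geq n$ for every $n$, the second via the observation that a Lipschitz path on a bounded interval has finite length and hence cannot be divergent, from which one extracts a convergent subsequence of $\gamma(t_n)$ in a compact set and checks continuity of the extension by the Lipschitz bound. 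Your scheme $(1)\Leftrightarrow(2)\Rightarrow(4)\Rightarrow(3)\Rightarrow(2)$, $(4)\Rightarrow(5)\Rightarrow(1)$ instead uses only $(1)\Leftrightarrow(2)$ as an external input and reproves $(3)\Rightarrow(2)$ from scratch by a Hopf--Rinow-style exhaustion. This is the single hardest piece of the classical length-space Hopf--Rinow theorem, which the paper deliberately avoids reproving; your sketch of it is essentially right but glosses over two points that would need to be written out (the existence of minimizing geodesics $\sigma_n$ inside compact sub-balls, obtained by minimizing over paths that automatically stay in $\bar B(x_0, L(\gamma_k))$, and the final step that sequential compactness of $\bar B(x_0,R)$ plus local compactness plus the length-space property yields compactness of $\bar B(x_0,R+\varepsilon)$). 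The paper's implications $(2)\Rightarrow(5)$ and $(5)\Rightarrow(4)$ are also appreciably shorter than your $(4)\Rightarrow(5)$ and $(5)\Rightarrow(1)$: the latter requires a concatenation/arclength-reparametrization construction from a Cauchy sequence, whereas the paper's $(2)\Rightarrow(5)$ is a one-line estimate. In short, your route is more self-contained with respect to the cited reference, at the cost of redoing a substantial portion of the classical theorem; the paper's choice is to lean maximally on \cite{BBI} and prove only what is genuinely new about assertions (4) and (5).
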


\begin{proof}
It is proven in \cite{BBI} that $(1)\Leftrightarrow(2)\Leftrightarrow(3)$. Moreover, $(4)\Rightarrow (3)$ trivially. \\
We prove that $(2)\Rightarrow (5)$. For $n\in\N$, consider the compact sets $B_n^X(\gamma(0))$. Since $\gamma$ is divergent, there exists a sequence  $\{t_n\}_{n=1}^\infty\subset (0,a)$ such that $\gamma(t_n)\not\in \overline{B}_n^X(\gamma(0))$. In particular
$$
L_d(\gamma)\geq L_d(\gamma|_{[0,t_n]})\geq d(\gamma(0),\gamma(t_n))\geq n.
$$
Since $n$ can be arbitrarily large, $\gamma$ has infinite length.\\
To conclude, we prove that $(5)\Rightarrow (4)$. Let $\gamma:[0,a)\to X$ be a Lipschitz rectifiable path. Since $\gamma$ is defined on $[0,a)$ and is Lipschitz, it has finite length. Then it can not be divergent. Namely, there exists a compact set $K\subset X$ and a sequence $\{t_n\}_{n=1}^\infty\subset (0,a)$ such that $t_n\to a$ as $n\to\infty$ and $\gamma(t_n)\in K$ for all $n$. By compactness of $K$, up to passing to a subsequence, $\gamma(t_n)\to x$ as $n\to\infty$ for some limit point $x\in K$. Set $\gamma(a)=x$. We are going to show that $\gamma:[0,a]\to X$ is continuous. Fix $\e>0$. Take $N\in\N$ large enough such that $d(\gamma(t_n),x)<\e/2$ for all $n\geq N$ and $t_N>a-\frac{\e}{2C_\g}$, where $C_\g$ is the Lipschitz constant of $\g$. Then for all $t\in(t_N,a)$,
$$
d(\gamma(t),x)\leq d(\gamma(t),\g(t_N))+d(\g(t_N),x)\leq C_\g|t-t_N|+\e/2 \leq \e.
$$
\end{proof}

We shall need to consider metric properties of curves into a manifold with boundary with respect to both the original metric and to the extended one. To this end, the following Lemma will be crucial.
\begin{lemma}\label{lemma-length}
Let $(N,g_{N})$ be a Riemannian extension of the manifold with boundary $(M,g_{M})$ and let $\gamma : [0,1] \to M$ be a fixed curve. Then
\begin{enumerate}
\item[(a)] $\gamma$ is $d_{(N,g_{N})}$-Lipschitz (resp. rectifiable) if and only if it is $d_{(M,g_{M})}$-Lipschitz (resp. rectifiable).
\end{enumerate}
Moreover, in this case:
\begin{enumerate}
 \item [(b)] $L_{g_{M}}(\gamma) = L_{g_{N}}(\gamma)$.
 \item [(c)] The speed $v_{\gamma}$ of $\gamma$, in the sense of \cite{BBI}, is the same when computed with respect to $d_{(M,g_{M})}$ and $d_{(N,g_{N})}$.
\end{enumerate}
\end{lemma}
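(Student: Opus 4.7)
Writing $d_M=d_{(M,g_M)}$ and $d_N=d_{(N,g_N)}$ for brevity, the key preliminary input is the trivial inequality
\[
d_N(x,y)\le d_M(x,y)\qquad\forall\, x,y\in M,
\]
which holds because every piecewise $C^1$ path in $M$ is also such a path in $N$ of the same Riemannian length. This already yields the implications ``$d_M$-Lipschitz (resp. rectifiable) $\Rightarrow$ $d_N$-Lipschitz (resp. rectifiable)'' together with $L_{g_N}(\gamma)\le L_{g_M}(\gamma)$ and the corresponding bound on the speed. The whole content of the lemma thus lies in the reverse inequalities.

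My plan is to reduce everything to a local comparison: for each $p\in M$ and each $\varepsilon>0$, I will produce a neighbourhood $V\subset M$ of $p$ with
\[
d_M(x,y)\le (1+\varepsilon)\,d_N(x,y)\qquad\forall\, x,y\in V.
\]
When $p\in\inte M$, a small ball around $p$ contained in $\inte M$ is open in $N$ too, so $d_M=d_N$ on $V$. When $p\in\pM$, I will use Fermi coordinates for $\pM$ in $N$ to flatten the boundary to $\{x^m=0\}$ with $M$ corresponding to $\{x^m\ge 0\}$ on an ambient coordinate ball. Since $g_N$ is smooth and equals the Euclidean metric at $p$, the coordinate ball can be shrunk so that $(1-\eta)|\cdot|_{\mathrm{Eucl}}\le|\cdot|_{g_N}\le (1+\eta)|\cdot|_{\mathrm{Eucl}}$. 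Then any $x,y\in V$ are joined by the straight Euclidean segment, which stays in the half-space, hence in $M$: its $g_N$-length bounds $d_M(x,y)$ above, whereas $d_N(x,y)\ge (1-\eta)|x-y|_{\mathrm{Eucl}}$. Choosing $\eta$ small enough in terms of $\varepsilon$ yields the claimed estimate.

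With the local comparison in place, all three statements follow by a routine compactness argument. Since $\gamma([0,1])\subset M$ is compact, I cover it by finitely many neighbourhoods $V_j$ as above; then any partition $0=t_0<\cdots<t_n=1$ whose mesh is smaller than the Lebesgue number of $\{\gamma^{-1}(V_j)\}$ maps each subinterval into a single $V_{j_i}$, on which the comparison $d_M\le (1+\varepsilon)d_N$ applies. Summing over $i$ and passing to the supremum over refinements yields $L_{g_M}(\gamma)\le (1+\varepsilon)L_{g_N}(\gamma)$, which is (b); applied to the sub-arc $\gamma|_{[s,t]}$ the same estimate gives the Lipschitz comparison in (a); applied to shrinking intervals around a fixed $t$ it delivers (c). Letting $\varepsilon\to 0$ concludes.

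The main obstacle is the local lemma at boundary points, where one must exclude the possibility that shortcuts through $N\setminus M$ yield strictly shorter paths between nearby points of $M$, even where $\pM$ bends away from $M$. The Fermi coordinate estimate is the essential mechanism, as it reduces the problem to a trivial comparison of distances in half-spaces of an almost-Euclidean metric, where straight Euclidean segments connecting points of the half-space stay in the half-space by convexity.
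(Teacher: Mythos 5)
Your proof is correct and rests on the same key geometric mechanism as the paper's: convexity of the half-ball in boundary-flattening coordinates, which is what excludes the possibility that nearby points of $M$ can be joined more cheaply by detouring through $N\setminus M$. The difference is in how the rest of the lemma is organized. You establish the sharper local estimate $d_{(M,g_M)}\le (1+\varepsilon)\,d_{(N,g_N)}$ using almost-isometric Fermi charts (rather than merely bi-Lipschitz ones), and then deduce (a), (b), and (c) uniformly from this single estimate via a Lebesgue-number/compactness step followed by $\varepsilon\to 0$. The paper instead proves (a) with a qualitative bi-Lipschitz chart comparison and then, for the length equality (b), invokes a partition argument together with Lemmas~1 and~3 of \cite{AA}; part (c) is obtained from (b) via \cite[Corollary 2.7.5]{BBI}. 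Thus the proposal buys self-containment (no appeal to \cite{AA}) and a unified derivation of the three parts, at the cost of having to run the almost-isometric estimate carefully. Two details to tighten in that estimate: (i) the lower bound $d_{(N,g_N)}(x,y)\ge (1-\eta)\,|x-y|_{\mathrm{Eucl}}$ must also account for $N$-paths that leave the coordinate ball, which is handled by restricting $x,y$ to a strictly smaller concentric ball so that escaping paths are automatically long enough; (ii) the remark that $d_M=d_N$ on a small ball $V\subset\inte M$ is not automatic, since an $N$-shortest path might a priori exit $V$, but this case is anyway covered by the same $(1+\varepsilon)$-comparison as at boundary points, so the slip is harmless.
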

\begin{proof}
We preliminarily observe that $d_{(N,g_{N})} \leq d_{(M,g_{M})}$ on $M$. \smallskip

(a) It is enough to consider the Lipschitz property because, as we have already recalled, every rectifiable path has a Lipschitz (constant speed) reparametrization.

We assume that $\gamma$ is $d_{(N,g_{N})}$-Lipschitz  and we prove that $\gamma$ is $d_{(M,g_{M})}$-Lipschitz, the other implication being trivial from the above observation. We shall show that, for every $t_{0} \in [0,1]$, there exists a closed interval $I_{0} \subset [0,1] $ containing $t_{0}$ in its interior such that $\gamma|_{I_{0}}$ is $d_{(M,g_{M})}$-Lipschitz.  We suppose that $\gamma(t_{0}) \in \partial M$, the other case being easier. Let $\varphi_{0} : U_{0} \to \bb_{1}$ be a local coordinate charts of $N$ centered at $\gamma(t_{0})$ and such that $\varphi_{0}(U_{0} \cap M) = \bb_{1}^{+}$, the upper-half unit ball. Let $V_{0} = \varphi_{0}^{-1}(\bb_{1/2})$ and choose $I_{0}$ such that $\gamma(I_{0}) \subset V_{0}$. Note  that the distances $d_{(N,g_{N})} $ and $d_{(V_{0} ,g_{N})}$ are equivalent on $V_{0}$ and, similarly, $d_{(M,g_{M})} $ and $d_{(V_{0}\cap M ,g_{M})}$ are equivalent on $V_{0} \cap M$. Moreover, $\varphi_{0} : (V_{0},d_{(V_{0},g_{N})}) \to (\bb_{1/2},d_{(\bb_{1/2},g_{\mathrm{Eucl}})})$ and $\varphi_0 : (V_{0} \cap M ,d_{(V_{0} \cap M,g_{M})}) \to (\bb^{+}_{1/2} ,d_{(\bb^{+}_{1/2} ,g_{\mathrm{Eucl}})})$ are  bi-Lipschitz.
Since $\gamma$ is $d_{(N,g_{N})}$-Lipschitz then $\varphi_{0} \circ \gamma|_{I_{0}}$ is $d_{(\bb_{1/2},g_{\mathrm{Eucl}})}$-Lipschitz. Since $\bb^{+}_{1/2}$ is convex then $\varphi_{0} \circ \gamma|_{I_{0}}$ is $d_{(\bb^{+}_{1/2},g_{\mathrm{Eucl}})}$-Lipschitz. Hence $\g|_{I_{0}}$ is $d_{(M,g_{M})}$-Lipschitz.\smallskip

(b) Using a partition of $[0,1]$ by sufficiently small subintervals we can apply \cite[Lemma 1 and Lemma 3]{AA}.\smallskip

(c) This follows from (b) and \cite[Corollary 2.7.5]{BBI}.
\end{proof}

\subsection{The proof of Theorem \ref{theorem-extension}}
Let $g_{Q}$ be any  Riemannian metric on $Q$ and let $\eta: \partial M \to \partial Q$ be a selected diffeomorphism. Let us consider the smooth gluing $N:= M \cup_{\eta} Q$ whose differentiable structure is obtained in a standard way using collar neighborhoods of the manifolds involved. More precisely, $N$ is the topological manifold without boundary obtained from $M \cup Q$ identifying points $x$ and $\eta(x)$ for every $x \in \partial M$. 
With a slight abuse of notation, here and on we consider $M$ and $Q$ as subsets of $N$ such that $M\cap Q=\pM$, and we identify objects on $M$ and $Q$ with their images on $N$ via the inclusions $M\hookrightarrow N$ and $Q\hookrightarrow N$.
Let $\mathcal W_M\subset M$ be an open tubular neighborhood of $\pM$ and let $p_M:\mathcal W_M\to\pM\times (-1,0]$ 
be the corresponding smooth diffeomorphism, whose restriction 
$p_M|_\pM : \pM\subset \mathcal W_M \to \pM\times 0$ is the identity map $p_M (x) = x \times 0$. 
Similarly, let $\mathcal W_Q\subset Q$ be a  tubular neighborhood of $\partial Q$ and let $p_Q: \mathcal W_Q\to\pM\times [0,1)$ 
be the corresponding smooth diffeomorphism, whose restriction 
$p_Q|_\pM : \pM\subset W_Q \to \pM\times 0$ is the identity map $p_Q (x) = x \times 0$.

Then $p_M$ and $p_Q$ induce a homeomorphism $p:\mathcal W= \mathcal W_M\cup \mathcal W_Q\subset N \to \pM\times (-1,1)$. The differentiable structure on $N$ is obtained by imposing that the homeomorphism $p$ is a smooth diffeomorphism and that the inclusions 
$j_M: \pM \hookrightarrow N$ and $j_Q: \partial Q \hookrightarrow N$ are smooth embeddings.\smallskip

The proof of Theorem \ref{theorem-extension} is now achieved in three steps that we formulate as the following Lemmas of independent interest.

\begin{lemmaA}\label{lemma-extension-1}
 Keeping the above notation, there exists a Riemannian metric $\tilde g$ on $N$ such that $\tilde g = g_{M}$ on $M$, i.e., $(N,\tilde g)$ is a Riemannian extension of $(M,g_{M})$. Moreover,  for every $\epsilon >0$, there exists a tubular neighborhood $\mathcal X_{Q}\subseteq \mathcal{W}_{Q}$ of $\pM$ in $N\setminus M$ such that:
\begin{enumerate}
 \item [(a)] $P := M \cup \mathcal{X}_{Q} \subset N$ is a manifold with smooth boundary.
 \item [(b)] there exists a $(1+\epsilon)$-Lipschitz projection $\rho:  (P,\tilde g) \to (M,g_M)$ such that $\rho|_{\mathcal X_{Q}}$ is a diffeomorphism.
\end{enumerate}
\end{lemmaA}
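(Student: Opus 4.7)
My plan is to build both $\tilde g$ and $\rho$ inside the collar coordinates $p : \mathcal W \to \partial M \times (-1, 1)$, using the reflection $(x, s) \mapsto (x, -s)$ as the essential device. To make the comparison of metrics sharp at $s = 0$, I would first replace $p_M$, on a (possibly variable-width) neighborhood of $\partial M$, by a Fermi parametrization so that $(p_M^{-1})^* g_M = ds^2 + h_s$ for a smooth family $\{h_s\}$ of Riemannian metrics on $\partial M$. Applying Seeley's extension theorem componentwise to $h_s$ produces a smooth symmetric $2$-tensor $\tilde h_s$ on $\partial M \times (-1, 1)$ that agrees with $h_s$ for $s \leq 0$. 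Since positive-definiteness is open and $\tilde h_s|_{s = 0} = h_0$ is a metric, there is a smooth positive function $\delta_0 : \partial M \to (0, 1)$ such that $g_1 := ds^2 + \tilde h_s$ is a genuine Riemannian metric on the set $\mathcal V \subset \mathcal W$ corresponding to $\{(x, s) : s < \delta_0(x)\}$, and $g_1 = g_M$ on $M$. To globalize, let $g_2$ be any smooth Riemannian metric on $N$ (built via partition of unity from $g_M$ on $M$ and any metric on $Q$) and pick a smooth cutoff $\chi : N \to [0, 1]$ with $\chi \equiv 1$ on $M$ and $\supp \chi \Subset \mathcal V$. Then $\tilde g := \chi g_1 + (1-\chi)g_2$, extended by $g_2$ outside $\mathcal V$, is the sought-after smooth Riemannian metric, since a convex combination of Riemannian metrics is Riemannian and $\chi \equiv 1$ on $M$.

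Next I would define $\rho$. In the $p$-chart, consider the smooth diffeomorphism $\phi(x, s) := (x, -s)$ sending $\partial M \times (0, 1)$ onto $\partial M \times (-1, 0)$, extended continuously by the identity on $\partial M$. Set $\rho := p_M^{-1} \circ \phi \circ p_Q$ on $\mathcal W_Q \setminus \partial M$ and extend $\rho$ by the identity on $M$; the two definitions agree at $\partial M$, so $\rho$ is continuous on $P$, smooth off $\partial M$, and its restriction to the $Q$-side collar is a diffeomorphism onto its image in $\mathcal W_M$. Thanks to the Fermi form, $\phi^* g_M|_{s = 0} = ds^2 + h_0 = \tilde g|_{s = 0}$; by continuity and paracompactness of $\partial M$ one can then choose a smooth function $\delta : \partial M \to (0, 1)$ with $\delta \leq \delta_0$ such that the pointwise comparison $\phi^* g_M \leq (1+\epsilon)^2 \tilde g$ holds as quadratic forms on $\{(x, s) : 0 \leq s \leq \delta(x)\}$. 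Setting $\mathcal X_Q := p_Q^{-1}(\{(x, s) : 0 < s \leq \delta(x)\})$, the union $P = M \cup \mathcal X_Q$ has smooth boundary $p_Q^{-1}(\{(x, s) : s = \delta(x)\})$ because $\delta \in C^\infty(\partial M)$; this establishes (a).

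For the Lipschitz bound (b), on $\inte M$ we have $\rho = \mathrm{id}$ and $\tilde g = g_M$, so $|d\rho(v)|_{g_M} = |v|_{\tilde g}$. On $\mathcal X_Q \setminus \partial M$, the pointwise comparison translates into $|d\rho(v)|_{g_M} \leq (1+\epsilon) |v|_{\tilde g}$. For any piecewise $C^1$ curve $\gamma : [0, 1] \to P$ the zero-measure set $\gamma^{-1}(\partial M)$ is irrelevant to the length integrals, so $L_{g_M}(\rho \circ \gamma) \leq (1 + \epsilon) L_{\tilde g}(\gamma)$; since $(P, \tilde g)$ is a length metric space, passing to the infimum over $\gamma$ yields $d_{(M, g_M)}(\rho(x), \rho(y)) \leq (1 + \epsilon) d_{(P, \tilde g)}(x, y)$, as required. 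The principal technical obstacle is the non-compactness of $\partial M$: the Fermi neighborhood, the Seeley-extension positive-definiteness zone, and the collar width $\delta$ must all be chosen as smooth positive functions of $x \in \partial M$ rather than uniform constants. Once one is comfortable with variable-width collars, everything else reduces to standard ingredients (Seeley extension, partition-of-unity gluing of metrics, convexity of the space of Riemannian metrics) and to the passage from the pointwise metric inequality to the intrinsic-distance inequality via the length-space structure.
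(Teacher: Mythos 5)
Your strategy mirrors the paper's at a structural level: extend the metric across a collar, define $\rho$ as a reflection in Fermi-type coordinates, and get the Lipschitz constant by a continuity argument at $\pM$. The one substantive methodological difference is in the construction of $\tilde g$: you put $g_M$ in Fermi normal form $ds^2 + h_s$ and invoke Seeley's extension theorem for the family $h_s$, while the paper extends the symmetric-matrix representative of $g_M$ chart by chart (in a locally finite covering of the collar cylinder) and then glues with a partition of unity, using only the openness of positive-definiteness. Both routes work; yours is slicker but imports Seeley as an external tool, and one should be a little careful that $\tilde g = \chi g_1 + (1-\chi)g_2$ is generally \emph{not} of the form $ds^2 + \tilde h_s$ on the $Q$-side (a cutoff with $\chi\equiv 1$ on $M$ and $\supp\chi\Subset\mathcal V$ can decay immediately off $\pM$). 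This does not break anything, since the comparison you actually need is $\phi^* g_M|_{s=0} = \tilde g|_{s=0}$, which only uses $\tilde g|_\pM = g_M|_\pM$, but it is worth noticing.

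There is, however, a genuine gap in your passage to the length inequality: $\gamma^{-1}(\pM)$ is \emph{not} of measure zero in general for a piecewise $C^1$ (or Lipschitz) curve $\gamma:[0,1]\to P$ — the curve may run inside $\pM$ for a whole subinterval, or $\gamma^{-1}(\pM)$ may even be a fat Cantor set. So the step ``the zero-measure set $\gamma^{-1}(\pM)$ is irrelevant to the length integrals'' fails as stated. What saves the argument (and is essentially what the paper does) is to observe that the speed inequality $v_{\rho\circ\gamma}(t)\leq(1+\epsilon)v_\gamma(t)$ holds on the open set $\gamma^{-1}(P\setminus M)$ (chain rule plus your pointwise quadratic-form comparison) and, with constant $1$, on $\inte\big(\gamma^{-1}(M)\big)$ (where $\rho = \mathrm{id}$ and $\tilde g = g_M$), and to integrate over the union; this also requires knowing beforehand that $\rho\circ\gamma$ is Lipschitz so that $L_{g_M}(\rho\circ\gamma)=\int_0^1 v_{\rho\circ\gamma}\,dt$, i.e.\ that $\rho$ is locally Lipschitz across $\pM$. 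Your chart description of $\rho$ as the reflection $(x,s)\mapsto(x,-s)$ does supply that (it is Lipschitz in coordinates, and local Lipschitzianity is metric-independent), but both points — the decomposition replacing the false measure-zero claim, and the local Lipschitzianity of $\rho$ across the boundary — need to be made explicit for the proof to stand.
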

In what follows, the value of $\epsilon$ is irrelevant. Therefore, we will always assume that $\epsilon = 1$.
\begin{lemmaA}\label {lemma-extension-2}
 Let $(M,g_{M})$ and $(P , g_{P}= \tilde g|_{P})$ be as above. If $(M,g_{M})$ is complete then  so is $(P,g_{P})$.
\end{lemmaA}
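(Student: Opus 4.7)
The plan is to verify that $(P, g_P)$ is divergent paths complete, since by the length-space Hopf--Rinow theorem (Theorem \ref{th_HR}), on a locally compact length space this is equivalent to metric completeness. So I take an arbitrary locally Lipschitz divergent path $\gamma : [0,1) \to P$ and aim to show that $L_{g_P}(\gamma) = +\infty$.

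The strategy is to push $\gamma$ down to $M$ via the $(1+\epsilon)$-Lipschitz projection $\rho : (P, \tilde g) \to (M, g_M)$ from Lemma \ref{lemma-extension-1} (we may take $\epsilon = 1$, so $\rho$ is $2$-Lipschitz). Composing, $\tilde{\gamma} := \rho \circ \gamma : [0,1) \to M$ is locally Lipschitz and satisfies $L_{g_M}(\tilde{\gamma}) \leq 2\, L_{g_P}(\gamma)$. Arguing by contradiction, I would suppose $L_{g_P}(\gamma) < +\infty$; then $\tilde{\gamma}$ has finite length in the complete manifold $(M, g_M)$, and Theorem \ref{th_HR} applied on $M$ lets $\tilde{\gamma}$ extend to a continuous path on $[0,1]$, with image $K := \tilde{\gamma}([0,1])$ a compact subset of $M$. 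Since $\gamma([0,1)) \subseteq \rho^{-1}(K)$, everything reduces to showing that $\rho^{-1}(K)$ is compact in $P$: then $\gamma$ would be trapped in a compact subset, contradicting its divergence and forcing $L_{g_P}(\gamma) = +\infty$.

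The main obstacle I expect is precisely this last compactness claim, i.e. the properness of $\rho$. To establish it I would decompose
\[
\rho^{-1}(K) \;=\; K \;\cup\; \rho|_{\mathcal X_Q}^{-1}(K \cap V), \qquad V := \rho(\mathcal X_Q) \subseteq M.
\]
The first summand is compact by construction. For the second, since $\rho|_{\mathcal X_Q} : \mathcal X_Q \to V$ is a diffeomorphism (hence a homeomorphism), its inverse sends compact subsets of $V$ to compact subsets of $\mathcal X_Q \subset P$; so the task reduces to verifying that $K \cap V$ is compact. This is the point where the specific form of $\rho$ given in Lemma \ref{lemma-extension-1} becomes essential: the construction realizes $\rho|_{\mathcal X_Q}$ as a collar-coordinate reflection identifying $V$ with a closed collar $\pM \times [-\delta, 0] \subset \mathcal W_M$, so that $V$ is \emph{closed} in $M$. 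Granted this, $K \cap V$ is a closed subset of the compact set $K$, hence compact, and the argument closes. If the construction did not directly yield $V$ closed, a fallback would be to invoke the explicit collar formula for $\rho$ on $\mathcal X_Q$ to verify properness of $\rho$ by hand, which should again follow from the fact that $\mathcal X_Q$ is a closed half-collar attached to $\pM$.
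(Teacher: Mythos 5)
Your argument is correct and takes essentially the same route as the paper's: both rest on the $2$-Lipschitz property and the properness of $\rho$, via the same decomposition $\rho^{-1}(K) = K \cup \rho|_{\mathcal X_Q}^{-1}(K \cap \mathcal X_M)$, with the only difference being the arrangement (you argue by contradiction and trap $\gamma$ in $\rho^{-1}(K)$, whereas the paper directly shows $\rho\circ\gamma$ is divergent and hence of infinite length, then transfers back via Lipschitzness). One small point: since $\rho\circ\gamma$ is only \emph{locally} Lipschitz on $[0,1)$, for the continuous extension step you should appeal to finite length together with item (5) of Theorem~\ref{th_HR} (or the Cauchy argument from its proof of $(5)\Rightarrow(4)$) rather than item (4), which as stated requires a global Lipschitz bound.
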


\begin{lemmaA}\label{lemma-extension-3}
Let $(M,g_{M})$ be an $m$-dimensional  Riemannian manifold with non-empty boundary. Let $(P,g_{P})$ be a complete Riemannian extension of $M$ with non-empty boundary. Let $(N,\tilde g)$ be a Riemannian extension of $(P,g_{P})$, hence of $(M,g_{M})$. Then, there exists a Riemannian metric $g_{N}$ on $N$ such that $(N,g_{N})$ is still a Riemannian extension of $(M,g_{M})$ and it is complete.
\end{lemmaA}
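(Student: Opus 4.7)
My plan is to realize $g_N$ as a conformal deformation $g_N = e^{2\phi}\tilde g$ with a smooth non-negative function $\phi:N\to [0,+\infty)$ vanishing on a neighborhood of $P$. Since $\tilde g|_P = g_P$ and the conformal factor is identically $1$ on $P$, one obtains $g_N|_P = g_P$ and, a fortiori, $g_N|_M = g_M$, so that $(N,g_N)$ is automatically a Riemannian extension of $(M,g_M)$. Everything then reduces to choosing $\phi$ so that $(N,g_N)$ is complete, which I would verify through the divergent-paths criterion, item (5) of Theorem~\ref{th_HR}.

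The design principle is to let $\phi$ grow outside a bounded $g_P$-neighborhood of a base point together with a tubular neighborhood of $P$, so that a divergent path either diverges inside the complete piece $(P,g_P)$, or escapes into the region of large $\phi$. Fix $o\in M$. By Theorem~\ref{th_HR} applied to $(P,g_P)$, the closed balls $B_n:=\overline{B}^{g_P}_n(o)$ are compact in $P$, hence in $N$ (as $P$ is closed in $N$). Take a smooth exhaustion of $N$ by relatively compact open sets $V_n\Subset V_{n+1}$ and set $K_n:=B_n\cup\overline{V_n}$; this is a compact exhaustion of $N$ that in addition exhausts $P$ through the complete metric $g_P$. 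Using the collar theorem on $\partial P\subset N$, fix a thin open collared tubular neighborhood $\mathcal U\supset P$ admitting a smooth proper retraction $\pi:\mathcal U\to P$ of Lipschitz constant at most $2$ with respect to $\tilde g$ and $g_P$. Choose smooth cutoffs $\chi_n:N\to [0,1]$ with $\chi_n\equiv 0$ on $K_n\cup\mathcal U$ and $\chi_n\equiv 1$ off $K_{n+1}\cup\mathcal U$, and set
\[
\phi := \sum_{n=1}^{\infty}\chi_n.
\]
Local finiteness of the sum makes $\phi$ smooth, non-negative, identically zero on $\mathcal U\supset P$, and bounded below by $n$ on $N\setminus(K_{n+1}\cup\mathcal U)$.

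To check completeness via Theorem~\ref{th_HR}(5), let $\gamma:[0,1)\to N$ be a locally Lipschitz divergent path. If $\gamma$ is eventually contained in $\mathcal U$, properness of $\pi$ makes $\pi\circ\gamma$ divergent in $P$; Theorem~\ref{th_HR}(5) applied to the complete $(P,g_P)$ then yields $L_{g_P}(\pi\circ\gamma)=+\infty$, and the Lipschitz bound for $\pi$ transfers this to $L_{\tilde g}(\gamma)=+\infty\le L_{g_N}(\gamma)$, the last inequality because $\phi\ge 0$ implies $g_N\ge\tilde g$. If instead $\gamma$ exits $\mathcal U$ at arbitrarily late parameters, divergence forces it to traverse successive shells $(K_{n+1}\setminus K_n)\setminus\mathcal U$ on which $\phi\ge n$, and each such traversal contributes at least $e^n\delta_n$ to $L_{g_N}(\gamma)$, with $\delta_n>0$ the $\tilde g$-width of the $n$-th shell; the sum diverges.

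The main technical obstacle I anticipate is the mixed regime where $\gamma$ enters and exits $\mathcal U$ infinitely often without either of the above alternatives being eventually true. My plan there is to argue by a dichotomy: either the countably many excursions into $N\setminus\mathcal U$ already sum, via the exponential factor, to an infinite $g_N$-length, or else, removing those excursions and concatenating the pieces of $\gamma$ lying in $\mathcal U$, one produces a Lipschitz divergent curve essentially in $\mathcal U$, to which the projection argument of the first case applies. Making this bookkeeping quantitative, with shell widths $\delta_n$ and collar Lipschitz constants chosen uniformly, is the genuine heart of the lemma.
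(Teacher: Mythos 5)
Your plan shares the paper's overall strategy: conformally deform $\tilde g$ by a factor $e^{2\phi}$ with $\phi\geq 0$ vanishing near $P$, then check divergent-paths completeness by splitting a hypothetical finite-length divergent path into pieces near $P$ and pieces away from $P$. The first two alternatives (eventually in the collar $\mathcal U$; eventually outside it and crossing successive shells) are treated essentially as in the paper. But the ``mixed regime'' that you flag as the genuine heart of the lemma is not a bookkeeping issue --- it is where your choice of $\phi$ genuinely fails, because your conformal factor is keyed only to distance from a base point and not to the boundary geometry of $P$.

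Concretely: your $\phi$ vanishes on all of $\mathcal U$, so when an excursion $\gamma|_{[\alpha_\lambda,\beta_\lambda]}$ leaves $\mathcal U$ and returns with small $g_N$-length, all you can deduce is that the endpoints $\gamma(\alpha_\lambda),\gamma(\beta_\lambda)\in\partial\mathcal U$ are close in $d_{(N,\tilde g)}$. This gives no control at all on their distance in $d_{(P,g_P)}$ (or the intrinsic distance of $\mathcal U$). Two points of $\partial P$ can be arbitrarily close in the ambient $\tilde g$-distance while their $g_P$-distance stays bounded away from zero --- the boundary can fold back toward itself inside $N$ (take $P=\R^2\setminus\{(x,y):0<y<1,\ x>0\}$; the points $(t,0)$ and $(t,1)$ have ambient distance $1$ but $P$-intrinsic distance about $2t$). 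So ``removing the excursions and concatenating the pieces of $\gamma$ in $\mathcal U$'' produces a discontinuous object, and there is no way to bridge the jumps by curves in $P$ of controlled length. The paper closes exactly this gap with item (b) of Lemma~\ref{lem_metric}: on each compact piece $\hat N_{j,b}$ of $N\setminus\inte P$, any Lipschitz path joining $x,y\in\partial P$ has $g_N$-length at least $d_{(P,g_P)}(x,y)$. This is arranged by showing the infimum $q_2^{j,b}$ of the ratio $\tilde d_{\hat N_{j,b}}/d_{(P,g_P)}$ is strictly positive (a compactness argument using smoothness of $\partial P$), and then putting $-\ln q_2^{j,b}$ into the conformal exponent. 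With property (b), each excursion can be replaced by a curve $\sigma_\lambda$ inside $P$ of $g_P$-length at most twice the $g_N$-length of the excursion, the concatenated curve $\sigma$ is shown rectifiable by a semicontinuity argument, and one derives the contradiction with divergent-paths completeness of $(P,g_P)$. Your $\phi$ carries no analogue of the $q_2^{j,b}$ terms, so property (b) fails for your metric; to repair the proof you would have to add to $\phi$ exactly the piece-by-piece stretch the paper introduces.
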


The rest of the section is entirely devoted to the proofs of these results.

\begin{proof}[Proof of Lemma \ref {lemma-extension-1}]
 We proceed by steps.\smallskip

 \noindent \textit{Step 1.} First, we construct a local extension of $g_{M}$ beyond $\pM$ in $N$.
Consider on the cylinder $\pM\times(-1,1)$ a locally finite family of coordinate charts $\{(V_\b,\psi_\b): \b\in B\}$ such that 
\begin{itemize}
 \item [(i)] $\cup_{\b\in B}V_\b\supset \pM\times \{0\}$,
 \item [(ii)] $\psi_\b(V_\b)= \bb_1$,
\end{itemize}
where $\bb_{1}$ denotes the unit ball in the Euclidean space $\rr^{m}$. Let $\mathcal S$ be the space of symmetric $m\times m$ matrices and set 
$$\mathbb{L}_{t}=\{(x_1,\dots,x_n)\in \bb_{1} : x_n\leq t\}.$$ 
In particular, $\mathbb{L}_{0} = \bb_{1}^{-}$, the lower-half unit ball. Fix $\b\in B$. The metric $g_{M}$ on $p^{-1}(V_\b)
\cap M$ is represented in local coordinates by a smooth section 
$s_\b:\mathbb{L}_0\to\mathcal S$, such that $s_\b(x)$ is positive definite for all $x\in \mathbb{L}_0$. Extend smoothly $s_\b$ to a section $\tilde s_\b: \bb_{1}  \to\mathcal S$. 
By continuity we can find a $t_\b\in (0,1]$ such that  $\tilde s_\b$ is positive definite for all $x\in \mathbb{L}_{t_\b}$. Define $\tilde V_\b=p^{-1}\circ\psi_\b^{-1}(\operatorname{int}\mathbb{L}_{t_\b})$. Repeating the construction for all $\b\in B$ we have obtained a family of local Riemannian metrics $\tilde g_\b$ defined on $\tilde V_\b$ for all $\b\in B$, such that $\tilde g_\b=g_{M}$ on $\tilde V_\b\cap M$. Moreover $\cup_{\b\in B}\tilde V_\b\supset \pM\times \{0\}$.\\

\noindent \textit{Step 2.} Next, we extend smoothly $g_{M}$ to a global metric $\tilde g$ on $N$. 
The collection of sets $\{\inte M, \inte Q,  \tilde V_\b : \b\in B\}$
gives a locally finite covering of $N$. Let $\{\eta_M,\eta_Q, \eta_\beta : \beta\in B\}$ be a subordinated partition of unity. Then 
$$\tilde g=\eta_M g_M + \eta_Q g_Q  + \sum_{\b\in B}\eta_\b\tilde g_\b$$ is a positive definite smooth Riemannian metric on $N$. Moreover, for all $x\in M$,
$$\tilde g|_x=\eta_M g_M|_x + \sum_{\b\in B}\eta_\b g|_x=g_M|_x.$$

\noindent \textit{Step 3.} Finally, we show how to construct the neighborhood $\mathcal X_Q$ and the Lipschitz projection $\rho$. 

For all $x\in\pM$, let $\nu(x)$ be the outward normal vector to $\pM$ at the point $x$. The exponential map
$\exp^\perp(x,s):=\exp_x(s\nu(x))$ is well defined for any $s$ small enough (depending on $x$), i.e. for $s\in [-s_0(x),s_0(x)]$ where we can assume that $s_0:\pM\to(0,\infty)$ is smooth. Set 
\begin{align*}
\mathcal X_Q&= \{\exp^\perp(x,s)\ :\ x\in\pM,0\leq s\leq s_0\},\\
\mathcal X_M&= \{\exp^\perp(x,s)\ :\ x\in\pM,0\geq s\geq -s_0\}.
\end{align*}
Define $\rho: M \cup \mathcal X_{Q} \to M$ as $\rho(\exp^\perp(x,s))=\exp^\perp(x,-s)$ when $s>0$ (i.e. $\rho$ reflects $\mathcal X_Q$ onto $\mathcal X_M$ with respect to Fermi coordinates) and $\rho = \mathrm{id}$ on $M$. Let $\|d \rho \|(p):= \sup_{T_{p}M \setminus \{0\}} |d_{p}\rho(v)|_{\rho(p)}/ |v|_{p}$ denotes the operator norm of $d_{p}\rho$. It is not difficult to see that  $\|d\rho \|(\exp^\perp(x,s))\to 1$ as $s\to 0$ for every $x\in\pM$, therefore we can choose the function $s_0$ so small, depending on $\epsilon$, that $\partial \mathcal X_{Q}$ is smooth and $\|d\rho \| \leq 1+ \epsilon$ on $P$. This latter bound implies that $\rho$ is a $(1+\epsilon)$-Lipschitz map. This amounts to show that, given a piecewise $C^{1}$-curve $\gamma : [0,a] \to P$, it holds
\begin{equation}\label{length}
L_{M} (\rho \circ \gamma) \leq (1+\epsilon) L_{M} (\gamma).
\end{equation}
To this aim, we note that $\rho$ is locally Lipschitz in  $P$. The only delicate points are those in the bi-collar neighborhood $\mathcal{X}_{M} \cup \mathcal{X}_{Q }$. But, in this set, $\rho$ is locally Lipschitz with respect to the product metric inherited from $\partial M \times [-1 , 1]$ and local Lipschitzianity does not depend on the ground metric. Now, the image $\rho \circ \gamma : [0,a] \to M$ is locally Lipschitz and its length satisfies
\[
L_{M} (\rho \circ \gamma) = \int_{0}^{a} v_{\rho \circ \gamma}(t) dt,
\]
where $v_{\rho\circ \gamma}$ denotes the speed of the curve in the sense of \cite{BBI}. In view of (c) of Lemma \ref{lemma-length}, since
\[
v_{\rho\circ \gamma}(t) \leq \| d \rho\|(\gamma(t)) \cdot   v_{\gamma}(t) \leq (1+\epsilon) v_{\gamma}(t)
\]
on the open and full measure subset of $[0,a]$:
\[
\gamma^{-1}(P\setminus M) \cup \inte \big([0,a] \setminus \gamma^{-1}(P \setminus M) \big)
\]
then, by integration, we deduce the validity \eqref{length}. 

\end{proof}

\begin{proof}[Proof of Lemma \ref{lemma-extension-2}]
First, we claim that given a locally Lipschitz, divergent path $\gamma : [0,1) \to P$ its (locally Lipschitz) projection $\rho \circ \gamma : [0,1) \to M$ is divergent. Indeed, if $K \subset M$ is a compact set, then $\rho^{-1}(K) = K \cup \rho|_{\mathcal X _{Q}}^{-1}(K \cap \mathcal X_{M})$ is compact in $P$. Therefore, there exists $0 \leq T <1$ such that $\gamma(t) \not \in \rho^{-1}(K)$ for every $T \leq t <1$. It follows that $\rho \circ \gamma (t) \not \in K$ for $T \leq t <1$, proving the claim.

Now, by Theorem \ref{th_HR}, $(M,g_{M})$ is divergent paths complete and therefore $L_{g_{M}}( \rho \circ \gamma) = +\infty$. Since $\rho$ is $2$-Lipschitz, we conclude that $L_{g_{P}}(\gamma) = +\infty$, as desired.
\end{proof}

\begin{proof}[Proof of Lemma \ref{lemma-extension-3}]
 Consider an exhaustion of $N$, i.e. a sequence $\{N_j\}_{j=0}^\infty$ of compact manifolds with smooth boundary such that $N_j\Subset N_{j+1}\subset N$ for all $j\geq0$ and $\cup_{j=0}^\infty N_j=N$. In the following, we use the convention $N_j=\emptyset$ whenever $j<0$. Call:
 \begin{itemize}
\item  $N_{j,a}$ any connected component of $(N\setminus \inte P)\cap (\overline{N_{j+1}\setminus N_{j}})$ for $a\in A_j$;
\item $\hat N_{j,b}$ any connected component of $(N\setminus \inte P)\cap(\overline{N_{j+2}\setminus N_{j-1}})$ for $b\in B_j$. Observe that $\#B_j\leq \# A_j<\infty$ for all $j$.
\end{itemize}
Finally, define
\begin{itemize}
\item $\partial_P\hat N_{j,a}=\hat N_{j,a}\cap \partial P$.
\end{itemize}
 
We have the following

\begin{lemma}\label{lem_metric}
There exists a smooth Riemannian metric $g_{N}$ on $N$ such that $(N,g_{N})$ is a Riemannian extension of $(M,g_{M})$ and, for all $j\in\N$, $a\in A_j$ and $b\in B_j$, the following hold:
\begin{enumerate}
\item [(a)] Let $x,y\in N_{j,a}$ with $x\in \partial N_j$ and $y\in \partial N_{j+1}$.  If $\gamma : [0,1] \to N_{j,a}$ is any Lipschitz path connecting $x$ to $y$ then $L_{g_{N}}(\gamma) \geq 1$.
\item [(b)] Let $x,y \in \partial_P\hat N_{j,b}$.
If $\gamma : [0,1] \to \hat N_{j,b}$ is any Lipschitz path connecting $x$ to $y$ then $L_{g_{N}}(\gamma) \geq d_{(P,g_{P})}(x,y)$.
\end{enumerate}
\end{lemma}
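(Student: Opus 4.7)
The approach is to build $g_N$ as a conformal deformation $g_N = \lambda^2 \tilde g$ for a smooth function $\lambda : N \to [1,\infty)$ with $\lambda \equiv 1$ on $M$. Since $\tilde g|_M = g_M$ (as $(N,\tilde g)$ extends $(P,g_P)$, which in turn extends $(M,g_M)$), this immediately gives $g_N|_M = g_M$, so $(N,g_N)$ is a Riemannian extension of $(M,g_M)$. A key observation is that $\partial M$ lies in the interior of $P = M \cup \mathcal X_Q$ in $N$, so $M \subset \inte P$; in particular each $N_{j,a}$ and $\hat N_{j,b}$ (being subsets of $N \setminus \inte P$) is disjoint from $M$, and $\lambda$ may be freely made large on them.

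To construct $\lambda$, fix a one-sided Fermi tubular neighborhood $\mathcal Y$ of $\partial P$ in $N \setminus \inte P$, with coordinates $(x_0, s) \in \partial P \times [0, s_0)$ in which $\tilde g = ds^2 + h_s$, $h_0 = g_P|_{\partial P}$. We impose three pointwise conditions on $\lambda$: (i) in $\mathcal Y$, $\lambda(x_0, s)/\lambda(x_0, 0) \geq c(x_0, s) := \sup_{0 \neq w \in T_{x_0} \partial P} |w|_{h_0}/|w|_{h_s}$, which (since $c(\cdot, 0) \equiv 1$ and $c$ is continuous) is smoothly achievable and makes the Fermi projection $\pi(x_0, s) = x_0$ a $1$-Lipschitz map from $(\mathcal Y, g_N)$ to $(\partial P, g_N|_{\partial P})$; (ii) on each $N_{j,a}$, $\lambda \geq 1/\delta_{j,a}$, where $\delta_{j,a} > 0$ is the $\tilde g$-distance within $N_{j,a}$ from $\partial N_j \cap N_{j,a}$ to $\partial N_{j+1} \cap N_{j,a}$ (positive by compactness); (iii) on $\hat N_{j,b} \cap \mathcal Y$, $\lambda \geq \diam_{g_P}(\partial_P \hat N_{j,b})/(2 s_0)$. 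A partition of unity adapted to the locally finite exhaustion then assembles these into a single smooth $\lambda \geq 1$ on $N$ with $\lambda \equiv 1$ on $M$.

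Verification of (a) is direct: for any Lipschitz path $\gamma$ in $N_{j,a}$ from $\partial N_j$ to $\partial N_{j+1}$,
\[
L_{g_N}(\gamma) \geq \min_{N_{j,a}} \lambda \cdot L_{\tilde g}(\gamma) \geq (1/\delta_{j,a}) \cdot \delta_{j,a} = 1.
\]
For (b), let $\gamma : [0,1] \to \hat N_{j,b}$ connect $x, y \in \partial_P \hat N_{j,b}$. If $\gamma \subset \mathcal Y$, the $1$-Lipschitz projection yields
\[
L_{g_N}(\gamma) \geq L_{g_N|_{\partial P}}(\pi \circ \gamma) \geq d_{g_N|_{\partial P}}(x,y) \geq d_{g_P|_{\partial P}}(x,y) \geq d_{(P, g_P)}(x, y),
\]
using $g_N|_{\partial P} \geq g_P|_{\partial P}$ (from $\lambda \geq 1$) and the fact that paths in $\partial P$ are paths in $P$. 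If instead $\gamma$ exits $\mathcal Y$, then $\gamma$ contains two subarcs in $\mathcal Y$ going from $\partial P$ to $\partial \mathcal Y$, each of $\tilde g$-length $\geq s_0$; together these contribute
\[
L_{g_N}(\gamma) \geq 2 s_0 \cdot \frac{\diam_{g_P}(\partial_P \hat N_{j,b})}{2 s_0} = \diam_{g_P}(\partial_P \hat N_{j,b}) \geq d_{(P, g_P)}(x,y).
\]

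The main obstacle is simultaneously realizing the pointwise constraints (i), (ii), (iii) by a single smooth $\lambda \geq 1$ on $N$ with $\lambda|_M \equiv 1$. This is managed by the local finiteness of the covering $\{N_{j,a}, \hat N_{j,b}\}$ and a standard partition-of-unity construction, the shell-dependent lower bounds being chosen monotone increasing in $j$ so that smooth interpolation across shell interfaces preserves each of them.
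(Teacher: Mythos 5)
Your proposal shares the paper's overall skeleton: a conformal deformation $g_N = f\,\tilde g$ with $f\ge 1$, $f\equiv 1$ on $M$, and $f$ made large enough on the shells $N_{j,a}$, $\hat N_{j,b}$ to force the desired length lower bounds. For part~(a) your argument is essentially identical to the paper's, which sets $q_1^{j,a}=\inf \tilde d_{N_{j,a}}(x,y)$ over $x\in\partial N_j$, $y\in\partial N_{j+1}$ (your $\delta_{j,a}$) and makes the conformal factor $\ge (q_1^{j,a})^{-2}$ there.

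For part~(b), however, you take a genuinely different route. The paper proves directly that the distortion
\[
q_2^{j,b}=\inf_{x\neq y\in\partial_P\hat N_{j,b}}\frac{\tilde d_{\hat N_{j,b}}(x,y)}{d_{(P,g_P)}(x,y)}
\]
is strictly positive, via a compactness argument whose key ingredient is that near the smooth boundary $\partial P$ the intrinsic distance $d_{(P,\tilde g)}$ and the ambient distance $d_{(N,\tilde g)}$ are infinitesimally comparable, i.e.\ $d_{(N,\tilde g)}(x_k,y_k)/d_{(P,\tilde g)}(x_k,y_k)\to 1$ when $x_k,y_k\to z\in\partial P$. Once $q_2^{j,b}>0$ is in hand, a single conformal blow-up by $(q_2^{j,b})^{-2}$ on $\hat N_{j,b}$ finishes (b) with no case analysis. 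Your approach instead builds the comparability directly into the metric: a Fermi projection $\pi$ onto $\partial P$, made $1$-Lipschitz for $g_N$ by the radial ratio condition~(i), disposes of paths that stay in the tube $\mathcal Y$, while paths that leave $\mathcal Y$ are charged via condition~(iii) and a tube-depth argument. Both routes hinge on the smoothness of $\partial P$ (the paper to get $q_2^{j,b}>0$, yours to set up Fermi coordinates); yours is more constructive and geometrically transparent, whereas the paper's is shorter and needs no case split.

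Two points deserve attention in your write-up. First, $\partial P$ is in general non-compact (since $M$ may be non-compact), so the Fermi collar cannot be taken of uniform width $s_0$; you need a smooth positive $s_0(\cdot)$ on $\partial P$ and, in condition~(iii), the quantity $2s_0$ must be replaced by the (strictly positive, by compactness of $\hat N_{j,b}$) infimum of $s_0$ over the portion of $\partial P$ lying beneath $\hat N_{j,b}$. Second, your conditions~(i), (ii), (iii) are not all simple pointwise lower bounds on $\lambda$ -- condition~(i) is a ratio constraint coupling $\lambda(x_0,s)$ to $\lambda(x_0,0)$ -- so the phrase ``a partition of unity assembles these'' glosses over a real (though fixable) step. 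A clean way, close in spirit to the paper's $e^{2\sum\max\{0,-\ln q\}\mu}$ construction, is to first build $\lambda$ on $\partial P$ from the $s=0$ requirements, extend radially by a smooth upper bound $\tilde c\ge\max(c,1)$ for the Fermi distortion (ensuring~(i)), and then take a smooth locally finite supremum with the cutoff-localized lower bounds coming from (ii) and (iii) -- increasing $\lambda$ away from $\partial P$ cannot violate~(i). With these repairs your argument gives the same conclusion as the paper's.
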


\begin{proof}
For the ease of notation, given a subset $C$ of $(N,\tilde g)$ we shall denote by $\tilde d_{C}$ the length metric on $C$ induced by $(N,\tilde g)$, namely,
\[
\tilde d_{C}(c_{1},c_{2}) = \inf L_{\tilde g}(\gamma)
\]
where the infimum is taken over the Lipschitz path in $C$ (if any) connecting $c_{1}$ with $c_{2}$.\smallskip

For any  $j\in\N$ and $a\in A_j$, define
\[
q_1^{j,a}=\inf \tilde d_{N_{j,a}}(x,y),
\]
where the infimum is taken over all the $x,y\in N_{j,a}$ such that $x\in \partial N_j$ and $y\in \partial N_{j+1}$. Since $\partial N_{j+1}\cap (N\setminus \inte P)$ and $\partial N_j\cap (N\setminus \inte P)$ are  compact and disjoint, $q_1^{j,a}>0$.\\
For any $j\in\N$ and $b\in B_j$, define
$$
 \delta^{j,b}(x,y)= \frac {\tilde d_{\hat N_{j,b}}(x,y)}{d_{(P,g_{P})}(x,y)},
$$
and
\[
q_2^{j,b}=\inf  \delta^{j,b}(x,y),
\]
where the infimum is taken over all the $x\neq y$ belonging to $\partial_P\hat N_{j,b}$. We claim that $q_2^{j,b}>0$. Indeed, suppose $q_{2}^{j,b}=0$. Then, there exist sequences of points $\{x_{k}\}$ and $\{y_{k}\}$ in $\partial_{P} \hat N_{j,b} \subset \partial P$  such that $\delta^{j,b}(x_{k},y_{k}) \to 0$. Since $\tilde d_{\hat N_{j,b}} \geq d_{(N,\tilde g)}$ on $\hat N_{j,b}$, we deduce that
\[
\frac { d_{(N,\tilde g)}(x_{k},y_{k})}{d_{(P,g_{P})}(x_{k},y_{k})} \to 0.
\]
Since $x_{k},y_{k}$ are in a compact subset of $P$ then the denominator  $d_{(P,g_P)}(x_{k},y_{k})$ is uniformly bounded. It follows that $d_{(N,\tilde g)}(x_{k},y_{k}) \to 0$. Therefore, by compactness of $\partial_{P} \hat N_{j,b}$, and up to passing to subsequences, we can assume that $\{x_{k}\} , \{y_{k}\}$ converge to a same point $z \in \partial_{P} \hat N_{j,b}$ with respect to the $d_{(N,\tilde g)}$ metric.
Since $P$ is a manifold with smooth boundary,
 \[
 \frac { d_{(N,\tilde g)}(x_{k},y_{k})}{d_{(P,g_{P})}(x_{k},y_{k})}= \frac { d_{(N,\tilde g)}(x_{k},y_{k})}{d_{(P,\tilde g)}(x_{k},y_{k})} \to 1,
 \]
 a contradiction.\\

For every $j\in\N$, $a\in A_j$ and $b\in B_j$, let $\mu_{j,a},\nu_{j,b}\in C^{\infty}_c((N\setminus M)\cap N_{j+2})$ be such that $0\leq\mu_{j,a},\nu_{j,b},\leq 1$, 
$$\mu_{j,a}|_{N_{j,a}}\equiv 1,\ \mu_{j,a}|_{N_{j-1}}\equiv 0,\ \nu_{j,b}|_{\hat N_{j,b}}\equiv 1,\ \nu_{j,b}|_{ N_{j-2}}\equiv 0.
$$
We define the smooth Riemannian metric $g_{N}$ on $N$ as
\begin{align*}
g_{N}(x)=e^{2\sum_{j=0}^\infty\left[\sum_{a\in A_j}\max\{0;-\ln(q_1^{j,a})\}\mu_{j,a}(x)
+\sum_{b\in B_j}\max\{0;-\ln(q_2^{j,b})\}\nu_{j,b}(x)\right]}
\tilde g(x).
\end{align*}
Note that $g_{N}$ is well defined, since the sum is locally finite. Moreover the conformal factor is everywhere greater or equal to $1$, and it is greater or equal to $(q_1^{j,a})^{-2}$ on $N_{j,a}$ and to  $(q_2^{j,b})^{-2}$ on $\hat N_{j,b}$. So the metric $g_{N}$ satisfies the claim of the lemma.
\end{proof}

To conclude the proof of Lemma \ref{lemma-extension-3}, we have to show that the metric $g_{N}$ of $N$ obtained in Lemma \ref{lem_metric} is (divergent paths) complete. To this end, we take a locally Lipschitz divergent path $\gamma :[0,1) \to N$  and we distinguish three different cases:\\

\noindent \textit{First case}. The path $\gamma$ is definitely contained in $N \setminus P$. Without loss of generality we can assume that the entire path $\gamma$ is contained in $N \setminus P$. Using item (a) of Lemma \ref{lem_metric} we easily deduce that $L_{g_{N}}(\gamma) = +\infty$.\\

\noindent \textit{Second case}. The path $\gamma$ is definitely contained in $\inte P$. As above, we can assume that $\gamma$ is entirely in $P$. Then, by assumption, $L_{g_{P}}(\gamma) = +\infty$. On the other hand, by definition of $g_N$ we have that  $L_{g_{N}} \geq L_{g_{P}}$ and, therefore, $L_{g_{N}}(\gamma) = +\infty$.\\

\noindent \textit{Third case}. There exists a sequence of times $t_{k} \to 1^{-}$ such that $\gamma (t_{2k}) \in N \setminus  P$ and $\gamma(t_{2k+1}) \in \inte P$ for all $k$. By contradiction, let us assume that $L_{g_{N}}(\gamma) < +\infty$. Then, up to starting from $T$ close enough to $1$ we can assume that $\ell := L_{g_{N}}(\gamma) < 1$ and that $\gamma(0) \in P$.
We consider the natural reparametrization of $\gamma$ and we assume that $\gamma : [0,\ell) \to N$ has unit speed; \cite[Proposition 2.5.9]{BBI}.

Consider the disjoint union 
\[
\gamma^{-1}(N \setminus P) = \dot \cup_{\lambda \in \nn} (\a_{\l}, \b_{\l}).
\]
Then, by item (a) of Lemma \ref{lem_metric}, for each $\l$ there exist $j_{\l}\in \nn$ and $b_{\l} \in B_{j_{\l}}$ such that
\[
\gamma((\a_{\l},\b_{\l})) \subset \hat N_{j_{\l},b_{\l}}.
\]
By item (b) of Lemma \ref{lem_metric}, for every $\l$,
\[
d_{(P,g_{P})}(\gamma(\a_{\l}), \gamma(\b_{\l})) \leq L_{g_{N}} (\g|_{(\a_{\l},\b_{\l})}).
\]
Hence there exists a Lipschitz curve $\s_{\l} : [\a_{\l},\b_{\l}] \to P$ with the same endpoints of $\gamma|_{[\a_{\l},\b_{\l}]}$, i.e.,
\[
\s_{\l}(\a_{\l}) = \gamma(\a_{\l}),\quad \s_{\l}(\b_{\l}) = \gamma(\b_{\l}),
\]
and such that
\begin{equation}\label{length-g}
L_{g_{P}}(\s_{\l}) \leq 2 L_{g_{N}} (\g|_{(a_{\l},\b_{\l})}) = 2( \b_{\l} - \a_{\l} ).
\end{equation}
We now construct a new path $\sigma : [0,\ell) \to P$ by setting
\[
\sigma(t) = \begin{cases}
\s_{\l}(t) &\text { if } t\in (\a_{\l},\b_{\l}), \text{ for some }\l \in \nn \\
\gamma(t) &\text {otherwise. }
\end{cases}
\]
Set $\mathcal A_n:=\cup_{\l=0}^n(\a_\l,\b_\l)$. For every $n\in\N$ we introduce the $d_{(N,\tilde g)}$-rectifiable paths $\gamma_n:[0,\ell)\to N$ by
\[
\gamma_n(t)=\begin{cases}
\s_{\l}(t) &\text { if } t\in \mathcal A_n,\\
\gamma(t) &\text {otherwise. }
\end{cases}
\]
From \eqref{length-g}, item (b) of Lemma \ref{lemma-length}, and the fact that, by construction, lengths with respect to $\tilde g$ are smaller than lenghts with respect to $g_N$, we deduce that for all $n\in \N$,
\begin{align*}
L_{\tilde g}(\g_n)&=L_{\tilde g}(\g|_{[0,\ell)\setminus \mathcal A_n})+ \sum_{i=0}^n L_{\tilde g}(\s_\l|_{(a_\l,\b_\l)})\\
&= L_{\tilde g}(\g|_{[0,\ell)\setminus \mathcal A_n})+ \sum_{i=0}^n L_{g_P}(\s_\l|_{(a_\l,\b_\l)})\\
&\leq L_{g_N}(\g|_{[0,\ell)\setminus \mathcal A_n})+ \sum_{i=0}^n 2L_{g_N}(\g|_{(a_\l,\b_\l)})\\
&\leq 2L_{g_N}(\g)=2\ell.
\end{align*}
By the semi-continuity of $L_{\tilde g}$ we get that $\s$ is $d_{(N,\tilde g)}$-rectifiable, and 
\begin{equation}\label{finite-length}
L_{g_P}(\s)=L_{\tilde g}(\s)\leq 2\ell.
\end{equation}
Namely, for any fixed $S\in(0,\ell)$ and for any finite partition
$0=s_0<s_1<\dots<s_K=S$, 
there exists $n\in N$ such that $\gamma_n(s_j)=\sigma(s_j)$ for all $j=0,\dots,K$, so that
\[\sum_{j=1}^K d_{(N,\tilde g)}(\s(s_{j-1}),\s(s_{j}))=\sum_{j=1}^K d_{(N,\tilde g)}(\gamma_n(s_{j-1}),\gamma_n(s_{j}))\leq L_{\tilde g}(\g_n)\leq 2\ell.
\]
Finally we show that $\s$ is divergent in $(P,g_P)$. This fact, together with \eqref{finite-length} will contradict the divergent paths completeness of $(P,g_P)$, thus concluding the proof of Lemma \ref{lemma-extension-3}.

To this purpose, fix a compact $C\subset P$ and let $j$ be large enough so that $C\subset N_j$. Since $\g$ is divergent in $N$, there exists $T\in [0,\ell)$ such that $\g(t)\not \in N_{j+1}$ for all $t\in (T,\ell)$. Set 
\[
\mathcal T:=\{\l\in\N\ :\ a_\l>T\text{ and }\s_\l([\a_\l,\b_\l])\cap N_j\neq \emptyset\}.\]
If $\mathcal T$ is empty, there is nothing to prove. Otherwise note that, for every $\l\in\mathcal T$, $\sigma_\l(\a_\l)\not \in N_{j+1}$ and $\sigma_\l(\b_\l)\not \in N_{j+1}$. Define
\[
c_j:=\min\{d_{(N,\tilde g)}(x,y)\ :\ x\in \partial N_j\text{ and }y\in \partial N_{j+1}\},
\]
which is well defined by compactness, and strictly positive since $N_j\Subset N_{j+1}$.
Then 
\[
\sharp\mathcal T \leq  \frac{L_{g_p}(\s)}{
2c_j}<\infty.
\]
Accordingly, we have that $\b^\ast:=\max_{\l\in\mathcal T}b_\l$ satisfies $\b^\ast<\ell$ and $\s([\b^\ast,\ell))\subset N\setminus N_{j}\subset N\setminus C$.
\end{proof}

\section{Some applications}

According to Theorem \ref{theorem-extension}, a Riemannian manifold with smooth boundary can be always realized as a  domain of a Riemannian manifold without boundary. Moreover, the ambient manifold can be chosen to be geodesically complete if the original manifold with boundary was metrically complete (hence a closed domain). This viewpoint on manifolds with boundary has two main consequences: on the one hand, open relations concerning Riemannian quantities of local nature extend trivially past the boundary of the manifold. On the other hand, by restriction, one can easily inherit basic results and constructions from complete manifolds without boundary.  We shall provide examples of both these instances.

\subsection{Local extensions with curvature constraints}
Let $(M,g_{M})$ be a Riemannian manifold with boundary $\partial M \not= \emptyset$ satisfying a strict curvature condition like $\curv_{M}>C$ or $\curv_{M} < C$ for some constant $C \in \rr$. Here, $\curv$ denotes either the sectional, the Ricci or the scalar curvature of the manifold at hand.\smallskip

Consider any Riemannian extension $(N,g_{N})$ of $(M,g_{M})$. Since $\curv >C$ (resp. $\curv < C$) and $\curv_{M} = \curv_{N}$ on $M$, by continuity there exists a neighborhood $U \subseteq N$ of $\partial M$ such that $\curv > C$ (resp. $\curv < C$) holds on $V = M \cup U$. \smallskip

Assume now that $\partial M$ is compact. We say that $\partial M$ is strictly convex (resp. strictly concave) if, with respect to the outward pointing Gauss map $\nu$, the eigenvalues $\lambda_{1},\cdots,\lambda_{m-1}$ of the shape operator $\mathcal{S}(X) = -\text{ }^{N}\!D_{X}\nu$ satisfy $\lambda_{j} <0$ (resp. $>0$). We choose $0 < \delta \ll 1$ in such a way that the normal exponential map $\text{ }^{N}\!\exp^{\perp}: \partial M \times (-\delta , \delta) \to V$
defines e diffeomorphism onto its image  and we can consider the corresponding family of  (diffeomorphic) parallel hypersurfaces
\[
(\partial M)_{t} = \text{ }\!^{N}\!\exp^{\perp}(\partial M \times \{t\}).
\]
Let $\mathcal{S}_{t}$ denote the shape operator of $(\partial M)_{t}$. It is known that its eigenvalues $\lambda_{1}(t),\cdots,\lambda_{m-1}(t)$ evolve (for a.e. $t$) according to the Riccati equation
\[
\frac{d \lambda_{j}}{dt} (t) = \lambda_{j}^{2} (t) + \sect_{N} (\nu \wedge E_{j}(t))
\]
where $E_{j}(t) \in T (\partial M)_{t}$ is the eigenvector of $\mathcal{S}_{t}$ corresponding to $\lambda_{j}(t)$; see e.g. \cite{G}. From this equation, under curvature restrictions and using comparison arguments, one could obtain sign conclusions on suitable intervals. Anyway, regardless of any curvature assumption, if
\[
\lambda_{j}(0) = \lambda_{j} <0
\]
(resp. $>0$), by continuity we find $0<\epsilon < \delta$ such that
\[
\lambda_{j}(t) < 0,\quad 0 \leq t \leq \epsilon,
\]
(resp. $>0$). Clearly, similar considerations hold for the mean curvature function. Thus, by taking $\bar N := M \cup \text{ }\!^{N}\!\exp^{\perp}(\partial M \times [0,\epsilon])$, we have proved the following result.

\begin{corollary}\label{corollary-localcurvature}
 Let $(M,g_{M})$ be a Riemannian manifold with boundary $\partial M \not= \emptyset$ and satisfying $\curv_{M} > C$ (resp. $\curv_{M} <C$). Then, there exists a Riemannian extension $(\bar N,g_{\bar N})$ of $M$ such that $\curv_{\bar N} > C$ (resp. $\curv_{\bar N} <C$). Moreover, assume that $\partial M$ is compact. If $\partial M$ is either strictly (mean) convex  or strictly (mean) concave, then $\bar N$ can be chosen so to have a boundary $\partial \bar N$ with the same property.
\end{corollary}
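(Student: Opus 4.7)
My plan is to assemble two continuity arguments on top of Theorem~\ref{theorem-extension}; completeness plays no role here. First, invoke Theorem~\ref{theorem-extension} to obtain an arbitrary Riemannian extension $(N,g_{N})$ of $(M,g_{M})$. Since $\curv$ is a continuous local invariant of the metric and $\curv_{g_{N}}=\curv_{g_{M}}$ on $M$, the strict inequality $\curv_{g_{N}}>C$ persists, by pointwise continuity, on an open neighborhood $U \subseteq N$ of the closed set $M$ in $N$. Any open subset $\bar N$ with $M \subseteq \bar N \subseteq U$ provides the required extension satisfying $\curv_{g_{\bar N}}>C$; the case $<C$ is identical.

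For the second assertion, suppose in addition that $\partial M$ is compact and, say, strictly convex; strict concavity and the mean-curvature variants follow by identical arguments, replacing each $\lambda_{j}$ by its opposite or by taking the average $\frac{1}{m-1}\sum_{j}\lambda_{j}$. By compactness of $\partial M$, one can find $\delta>0$ such that the normal exponential map
\[
\text{ }^{N}\!\exp^{\perp} : \partial M \times (-\delta,\delta) \longrightarrow N
\]
is a diffeomorphism onto its image, which, after possibly shrinking $\delta$, we assume to be contained in $U$. Let $(\partial M)_{t} = \text{ }^{N}\!\exp^{\perp}(\partial M \times \{t\})$ be the parallel hypersurface at signed distance $t$, and let $\mathcal{S}_{t}$ denote its shape operator with respect to the outward unit normal. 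The eigenvalues $\lambda_{1}(t),\dots,\lambda_{m-1}(t)$ vary continuously in $(x,t)$ (either via the Riccati evolution recalled above, or by direct smooth dependence of the second fundamental form on parameters).

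Since $\lambda_{j}(0)<0$ strictly for every $j$ and every $x \in \partial M$, and $\partial M$ is compact, the functions $\lambda_{j}$ are uniformly bounded above by a negative constant on $\partial M \times [0,\epsilon]$ for some $\epsilon \in (0,\delta)$. Setting
\[
\bar N := M \cup \text{ }^{N}\!\exp^{\perp}\bigl(\partial M \times [0,\epsilon]\bigr),
\]
we obtain a smooth Riemannian extension of $(M,g_{M})$ whose boundary $(\partial M)_{\epsilon}$ is strictly convex and which lies inside $U$, so that the strict curvature bound is preserved. The only minor subtlety is the need for the eigenvalue continuity to be uniform in the base point so as to pick a single $\epsilon$ that works everywhere, which is immediate from compactness of $\partial M$; aside from this, no serious obstacle is expected, since the whole argument reduces to continuous perturbation inside a pre-existing smooth extension.
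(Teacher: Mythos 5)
Your proposal is correct and follows essentially the same approach as the paper: take any Riemannian extension furnished by Theorem~\ref{theorem-extension}, use continuity of the curvature invariant (and compactness of the fibres of the Grassmann/sphere bundle, for $\sect$ or $\ric$) to preserve the open condition $\curv>C$ on a neighbourhood of $M$, and then use compactness of $\partial M$ together with the continuous (Riccati) evolution of the principal curvatures of the parallel hypersurfaces to preserve the sign of the shape operator out to some small $\epsilon$. The only cosmetic difference is that you speak of an open neighbourhood of $M$ rather than of $\partial M$ (equivalent, since $\{\curv>C\}$ is open in $N$ and contains all of $M$); the substance of the argument matches the paper's.
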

\begin{remark}
 \rm{
The situation is significantly more difficult if we replace the strict inequalities with their weak counterparts. In this case, smooth local extensions are clearly not allowed in general. The reader may consult \cite{R} for interesting existence results in this setting. 
}
\end{remark}

\subsection{Sobolev spaces}
In the geometric analysis on manifolds with boundary, the theory of (first order) Sobolev spaces, and the corresponding  density results, are vital to carry out PDE's constructions typical of the setting of manifolds without boundary. By way of example, we can mention the truncation method in order to obtain sub(super) solutions of Neumann problems for the Laplace operator and its applications to potential theory; see \cite{IPS}. In this respect, the Euclidean arguments work almost verbatim once we consider the manifold with boundary as a domain inside an ambient manifold without boundary. We are going to illustrate quickly this viewpoint by recovering a classical density result \'a la Meyers-Serrin; see e.g. \cite[Appendix A]{IPS}.\smallskip

Let $(M,g_{M})$ be a (possibly non-compact and incomplete) Riemannian manifold with boundary $\partial M \not = \emptyset$. Since $\inte M$ is a smooth manifold without boundary we can define, as usual, the space
\[
W^{1,p}(\inte M) = \{u:\inte M \to \rr: u \in L^{p}, \nabla u \in L^{p}\},
\]
where $\nabla u$ is the distributional gradient of $u$, endowed with the norm
\[
\| u \|_{W^{1,p}} = ( \| u \|^{p}_{L^{p}} + \| \nabla u \|^{p}_{L^{p}} )^{1/p}.
\]
Suppose now that $(M,g_{M})$ is complete and let $(N,g_{N})$ be a geodesically complete Riemannian extension without boundary. Fix a  locally finite, relatively compact, smooth atlas $\{(V_{j} , \varphi_{j})\}$ of $N$ such that either $V_{j} \cap M= \emptyset$ or $(U_{j} \cap M , \varphi_{j}|_{M})$ is a smooth chart of $M$. Without loss of generality, we can assume that $\phi_{j}(U_{j}) = \mathbb{B}_{1} \subset \rr^{m}$ and (in case $U_{j} \cap \pM \not= \emptyset$) $\phi_{j} (U_{j} \cap M) = \mathbb{B}^{+}_{1}$. We consider a partition of unity $\{\chi_k\}$ subordinated to the covering $\{U_k\}$ and, given a function $u \in W^{1,p}(\inte M)$, we decompose it as  $u= \sum_k u_k$ with $u_k = u \cdot \chi_k$. Now, for any fixed $\epsilon >0$, applying in local coordinates the standard approximation procedure, e.g. \cite[Theorem 10.29]{Le}, we find $\bar u_k \in C_{c}^{\infty}(U_k)$ such that
\[
\| u_k  - \bar u_k\|_{W^{1,2}(\inte M)} \leq \frac{\varepsilon}{2^k}.
\]
Thus, the locally finite sum $\bar u =\sum_k \bar u_k$ is a function in $C^{\infty}(N)$ and gives an $\varepsilon$-approximation of $u$ in the space $W^{1,2}(\mathrm{int}M).$ This implies the partial result:
\begin{equation}\label{MS2}
 W^{1,p}(\inte M) = \overline{C^{\infty}( M) }^{\| \cdot \|_{W^{1,p}}}.
\end{equation}
Finally, we have to approximate $\bar u|_{M}$ in $W^{1,p}(\inte M)$ with the restriction to $M$ of a function in $C^{\infty}_{c}(N)$. To this end take your favorite smooth function $\rho_{N} : N \to \rr_{>0}$ satisfying $\rho_{N} (\infty) = +\infty$ and $\| \nabla \rho_{N} \|_{L^{\infty}(N)} \leq L$. It can be obtained by regularizing the distance function by convolution methods; \cite{GW}. Moreover, choose  $\psi : \rr \to [0,1]$ to be any smooth function such that $\psi(t)  = 1$ if $t \leq 1$ and $\psi(t) = 0$ if  $t \leq 2$, and define the sequence $\psi_{k} := \psi (\rho_{N}/k)\in C^{\infty}_{c}(N)$. Then, $ \psi_{k}  \to 1$,  as $k \to +\infty$ uniformly on compact subsets of  $N$, and $\|\nabla \psi_{k}\|_{L^{\infty}(N)} \to 0$   as $k\to +\infty$.
It is then obvious, by dominated convergence, that  the sequence
\[
\bar {\bar u}_{k} = \bar{\bar u} \cdot \psi_{k}\in C^{\infty}_{c}(N)
\]
converges in $W^{1,p}(N)$ to $\bar{\bar u}$. By restriction, $\bar {\bar u}_{k}|_{M} \in C^{\infty}_{c}(M)$ converges to $\bar{\bar u}|_{M}$ in $W^{1,p}(\inte M)$. We have thus obtained the stronger density result:
\begin{corollary}\label{corollary-density}
Let $(M,g_M)$ be a complete Riemannian manifold with (possibly empty) boundary $\partial M$. Then
\begin{equation}\label{MS3}
  W^{1,p}(\inte M) = \overline{C_{c}^{\infty}( M) }^{\| \cdot \|_{W^{1,p}}}.
\end{equation}
\end{corollary}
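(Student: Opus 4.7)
The plan is to reduce to the standard Meyers--Serrin density theorem on a boundaryless manifold by exploiting the complete extension furnished by Corollary \ref{th_extension}. First, I would fix a geodesically complete Riemannian extension $(N,g_N)$ of $(M,g_M)$ with $\partial N = \emptyset$ in which $M$ sits as a closed subdomain. Every boundary point of $M$ is now an interior point of $N$ and therefore is covered by a genuine (not half) coordinate ball of $N$; this is the structural gain that makes smoothness up to $\partial M$ a trivial consequence of ambient smoothness.

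Next, I would build a locally finite atlas $\{(U_j,\varphi_j)\}$ of $N$ by relatively compact charts satisfying either $U_j \cap M = \emptyset$, $U_j \subset \inte M$, or a boundary-adapted condition $\varphi_j(U_j) = \bb_1 \subset \rr^m$ with $\varphi_j(U_j \cap M) = \bb_1^+$. Take a subordinate smooth partition of unity $\{\chi_j\}$ and decompose any $u \in W^{1,p}(\inte M)$ as $u = \sum_j u\chi_j$. In local coordinates on $\bb_1$, each summand is a compactly supported $W^{1,p}$ function, which can be approximated within $\varepsilon/2^j$ in $W^{1,p}$-norm by a function in $C_c^\infty(U_j)$ via standard Euclidean mollification. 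Summing (the collection is locally finite) yields $\bar u \in C^\infty(N)$ with $\|u - \bar u|_M\|_{W^{1,p}(\inte M)} \leq \varepsilon$, establishing the intermediate density
\[
W^{1,p}(\inte M) = \overline{C^\infty(M)}^{\|\cdot\|_{W^{1,p}}}.
\]

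To upgrade from $C^\infty$ to $C_c^\infty$, I would use the geodesic completeness of $(N,g_N)$ to produce a smooth proper exhaustion function $\rho_N : N \to \rr_{>0}$ with uniformly bounded gradient (the Greene--Wu regularization of the distance from a fixed point does the job). Fixing a bump $\psi \in C^\infty(\rr;[0,1])$ with $\psi \equiv 1$ on $(-\infty,1]$ and $\psi \equiv 0$ on $[2,\infty)$, the sequence $\psi_k := \psi(\rho_N / k) \in C_c^\infty(N)$ converges to $1$ uniformly on compacta and satisfies $\|\nabla \psi_k\|_{L^\infty(N)} \to 0$. Dominated convergence then gives $\bar u \, \psi_k \to \bar u$ in $W^{1,p}(N)$, hence $\bar u \, \psi_k|_M \to \bar u|_M$ in $W^{1,p}(\inte M)$, and these restrictions lie in $C_c^\infty(M)$. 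A diagonal combination with the previous step yields the desired $C_c^\infty(M)$-approximation of any $u \in W^{1,p}(\inte M)$.

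The main conceptual hurdle is boundary regularity of the local mollifications: in an intrinsic picture one would have to arrange mollifiers and extension procedures that behave well up to $\partial M$, which is delicate for low-regularity $u$. The boundaryless extension $(N,g_N)$ sidesteps this entirely, because every approximating function on $M$ arises by restriction from a smooth function on the ambient manifold, so smoothness at $\partial M$ is automatic. The only additional ingredient is the existence of a proper smooth exhaustion with bounded gradient on $N$, for which geodesic completeness of $N$ is exactly what one needs.
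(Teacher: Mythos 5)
Your proposal is correct and follows essentially the same route as the paper: extend to a complete boundaryless ambient manifold $N$ via Corollary \ref{th_extension}, use a locally finite atlas of $N$ adapted to $M$ together with a subordinate partition of unity and local Euclidean mollification to get density of $C^\infty(M)$, then pass to $C_c^\infty(M)$ by cutting off with $\psi(\rho_N/k)$ for a smooth proper exhaustion $\rho_N$ of $N$ with bounded gradient. The only cosmetic difference is that you explicitly flag the diagonalization combining the two approximation steps, which the paper leaves implicit.
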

As a side product, observe that by taking $\rho_{M} = \rho_{N}|_{M}$ we  also obtain the existence of a smooth, globally Lipschitz, exhaustion function on any complete manifold with boundary.
\begin{lemma}\label{lem_exhaustion}
 Let $(M,g_{M})$ be a complete Riemannian manifold with boundary $\partial M \not= \emptyset$. Then, there exists a smooth function $\rho_{M} : M \to \rr_{>0}$  satisfying
\begin{equation}\label{exhaustion}
\rho_{M} (\infty) = +\infty; \quad \| \nabla \rho_{M} \|_{L^{\infty}(M)} \leq L.
\end{equation}
\end{lemma}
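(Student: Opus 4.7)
The plan is to obtain $\rho_{M}$ as the restriction to $M$ of a corresponding exhaustion function living on a complete Riemannian extension of $M$ without boundary, which exists by Corollary \ref{th_extension}.

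First I would invoke Corollary \ref{th_extension} to produce a geodesically complete Riemannian extension $(N,g_{N})$ of $(M,g_{M})$ with $\partial N = \emptyset$. Since $N\setminus M$ is (diffeomorphic to) an open subset of $N$, the embedded copy of $M$ is \emph{closed} in $N$; this topological fact will be crucial at the end.

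Next, on the complete boundaryless manifold $(N,g_{N})$ I would fix a basepoint $o\in N$ and consider the distance function $r_{N}(x):= d_{(N,g_{N})}(x,o)$, which is $1$-Lipschitz and proper by the Hopf-Rinow theorem. This function is only continuous, but the Greene-Wu convolution smoothing procedure (see \cite{GW}) provides a smooth function $\rho_{N}:N\to\rr_{>0}$ that is uniformly close to $r_{N}$ on $N$ (say $|\rho_{N}-r_{N}|\leq 1$ globally) and satisfies $\nor{\nabla^{N}\rho_{N}}_{L^{\infty}(N)}\leq L$ for some constant $L$. In particular, $\rho_{N}$ is a smooth, globally Lipschitz exhaustion of $N$: its sublevel sets $\{\rho_{N}\leq c\}$ are contained in the sublevel sets of $r_{N}+1$, hence are relatively compact by Heine-Borel (Theorem \ref{th_HR}), and being closed they are compact.

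Finally, I would set $\rho_{M}:=\rho_{N}|_{M}$ and check \eqref{exhaustion}. Smoothness is immediate since $M$ is a smooth submanifold (with boundary) of $N$. For the Lipschitz bound, at any point $p\in M$ the tangent space $T_{p}M$ sits inside $T_{p}N$ and the metrics agree, $g_{M}=g_{N}|_{M}$; hence the Riemannian gradient of the restriction is the $g_{N}$-orthogonal projection of $\nabla^{N}\rho_{N}(p)$ onto $T_{p}M$, which has norm at most $|\nabla^{N}\rho_{N}(p)|_{g_{N}}\leq L$. For the properness $\rho_{M}(\infty)=+\infty$, I would use that $M$ is closed in $N$: for every $c>0$,
\[
\{x\in M:\rho_{M}(x)\leq c\}= M\cap\{\rho_{N}\leq c\}
\]
is a closed subset of the compact set $\{\rho_{N}\leq c\}$, hence compact in $N$ and therefore compact in $M$.

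The only mildly delicate point is this last properness step, since a priori divergence in $M$ (with respect to $d_{(M,g_{M})}$) and divergence in $N$ (with respect to $d_{(N,g_{N})}$) need not coincide because $d_{(N,g_{N})}\leq d_{(M,g_{M})}$ on $M$. The argument above bypasses this entirely: I do not compare distances, but rather use the topological fact that $M\subset N$ is closed together with compactness of $\rho_{N}$-sublevel sets. This is the main structural input from Theorem \ref{theorem-extension}, and everything else is routine.
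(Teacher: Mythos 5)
Your proposal takes essentially the same route as the paper: it invokes Corollary \ref{th_extension} to produce a complete boundaryless extension $(N,g_N)$, builds a smooth globally Lipschitz exhaustion $\rho_N$ on $N$ via the Greene--Wu regularization of the distance function, and then sets $\rho_M=\rho_N|_M$. The paper presents the lemma as an immediate side product of the construction already carried out in the Sobolev-space subsection and does not spell out the verification of \eqref{exhaustion}; you supply those details, and in particular your properness argument (closedness of $M$ in $N$ plus Heine--Borel compactness of $\rho_N$-sublevel sets) is exactly the point that makes the restriction trick work, and it is correct. One small simplification: since $M$ is a codimension-zero submanifold of $N$, one has $T_pM=T_pN$ at every $p\in M$ and $g_M=g_N|_M$, so in fact $\nabla^M\rho_M(p)=\nabla^N\rho_N(p)$ on the nose; there is no need to project, and the Lipschitz bound is inherited verbatim.
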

The proof of this fact is not completely obvious if we use the pure viewpoint of manifolds with boundary. The mollification procedure used to regularize a given Lipschitz function (e.g. the intrinsic distance function) requires some care.

\subsection{Proper Nash embedding} The classical formulation of the Nash embedding theorem states that any Riemannian manifold $(N,g_{N})$ can be isometrically embedded in some Euclidean space $\rr^{\ell}$, where $\ell = \ell(\dim N)$. In the ``survey'' part of the paper \cite{GR} it is claimed that the embedding can be chosen to be proper if $N$ is geodesically complete and, moreover, that the Nash embedding holds also for manifolds with boundary. An elementary, but clever, proof of the first claim can be found in \cite{Mul}. Here, we point out that the second claim can be trivially deduced from the first one,  by restricting to the manifold with boundary a proper isometric embedding of a complete Riemannian extension.
\begin{corollary}\label{corollary-nash}
Let $(M,g_{M})$ be a complete Riemannian manifold with boundary $\partial M \not= \emptyset$. Then, there exists a proper isometric embedding of $M$ into some Euclidean space  $\rr^{\ell}$ where $\ell = \ell(\dim M)$.
\end{corollary}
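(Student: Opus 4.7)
The plan is to reduce this to the proper Nash embedding theorem for complete manifolds \emph{without} boundary, using Corollary \ref{th_extension} as the bridge between the two settings.

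First, I would apply Corollary \ref{th_extension} to obtain a geodesically complete Riemannian extension $(N,g_{N})$ of $(M,g_{M})$ with $\partial N = \emptyset$. By the very definition of a Riemannian extension, $\dim N = \dim M =: m$, so any dimension constant produced in the sequel will be of the form $\ell = \ell(m)$ as required. Next, I would invoke the version of Nash's theorem for geodesically complete manifolds (as in \cite{Mul}, see also \cite{GR}): there exists a proper isometric embedding $\iota : (N,g_{N}) \to \rr^{\ell}$ for some $\ell = \ell(m)$.

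The candidate embedding is then $\iota|_M : M \to \rr^{\ell}$. Three things need to be verified. \emph{Isometric}: since $g_{N}|_{M} = g_{M}$ by the definition of extension, the restriction is automatically isometric. \emph{Embedding}: $\iota$ is a smooth injective immersion that is a topological embedding, hence so is its restriction to the embedded submanifold with boundary $M \subset N$. \emph{Proper}: this is the only point that requires a small remark. Since $(M,g_{M})$ is metrically complete and embeds isometrically into $N$, its image is a closed subset of $N$ (any limit point in $N$ of a sequence in $M$ would correspond to a Cauchy sequence in $M$, which converges in $M$ by completeness). Then for any compact $K \subset \rr^{\ell}$, one has
\[
(\iota|_M)^{-1}(K) \;=\; M \cap \iota^{-1}(K),
\]
which is the intersection of a closed subset of $N$ with the compact set $\iota^{-1}(K)$ (compact by properness of $\iota$), hence compact in $M$.

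There is really no main obstacle: the whole point of the corollary is that all the genuine difficulty has been absorbed into Corollary \ref{th_extension}. The only non-automatic ingredient is the closedness of $M$ in $N$, which follows immediately from metric completeness, and the only external input is the already-known proper version of Nash's theorem for complete boundaryless manifolds.
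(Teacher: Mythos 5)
Your approach coincides with the paper's one-line argument: extend via Corollary \ref{th_extension}, invoke the proper Nash theorem for the complete boundaryless manifold $N$, and restrict to $M$. The only real content you add is the verification of properness, and that is precisely where there is a gap: the parenthetical justification that $M$ is closed in $N$ is incorrect. A sequence $\{x_n\}\subset M$ converging in $N$ is Cauchy for $d_{(N,g_N)}$, but since $d_{(M,g_M)}\geq d_{(N,g_N)}$ on $M$ this does \emph{not} make it Cauchy for $d_{(M,g_M)}$, so completeness of $(M,g_M)$ cannot be invoked as you do. In fact, metric completeness of $M$ together with an isometric embedding into a complete $N$ of the same dimension does \emph{not} force $M$ to be closed in $N$: consider in $N=\mathbb{R}^2$ the strip $M=\{(x,y): 0<x\leq 1,\ \cos(1/x)\leq y\leq \cos(1/x)+\tfrac12\}$ (corners smoothed). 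Every divergent path in $M$ must track the oscillations of $\cos(1/x)$ and therefore has infinite length, so $(M,g_{\mathrm{Eucl}})$ is complete, yet $\overline{M}\supsetneq M$ in $\mathbb{R}^2$; here the restriction of the identity Nash embedding is not proper.

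The conclusion you want is nevertheless true, but for a different reason: the extension $(N,g_N)$ produced by Theorem \ref{theorem-extension} (and hence by Corollary \ref{th_extension}) is built by gluing, $N=M\cup_\eta Q$, so that $N\setminus M=\inte Q$ is open and therefore $M$ is closed in $N$ \emph{by construction}. You should cite this feature of the construction rather than a metric-completeness argument. With that replacement your properness argument, $(\iota|_M)^{-1}(K)=M\cap \iota^{-1}(K)$ is a closed subset of the compact $\iota^{-1}(K)$, is correct and the rest of the proof goes through.
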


\end{document}